\newcolumntype{2}{D{.}{}{2.0}}
\DeclareMathOperator{\rank}{\mathrm{rank}}
\newtheorem{theorem}{Theorem}[section]
\newtheorem{remark}{Remark}[section]
\newtheorem{defi}{Definition}[section]
\newtheorem{prop}{Proposition}[section]
\newtheorem{corollary}{Corollary}[section]
\numberwithin{equation}{section}
\let\oldref\ref
\renewcommand{\ref}[1]{(\oldref{#1})}
\title{The geometry of Riemannian curvature radii}
\author[1,2]{Eugenio Bellini}
\affil[1]{SISSA, Via Bonomea 265, 34136 Trieste, Italy}
\affil[2]{Dipartimento di Matematica e Applicazioni, Università degli Studi di Milano-Bicocca, Via R. Cozzi 55, 20126 Milano, Italy;\newline E-mail: eugenio.bellini01@universitadipavia.it}
\date{}
\begin{document}
	
	\maketitle
		\begin{abstract}
	    We study the geometric structures associated with curvature radii of curves with values on a Riemannian manifold $(M,g)$. We show the existence of sub-Riemannian manifolds naturally associated with the curvature radii and we investigate their properties. In the particular case of surfaces these sub-Riemannian structures are of Engel type. The main character of our construction is a pair of global vector fields $f_1,f_2$, which encodes intrinsic information on the geometry of $(M,g)$. 
	\end{abstract}
	\tableofcontents
	\section{Introduction}\label{sec_1}
	\subsection{From contact elements to curvature radii}
The space of contact elements, first introduced by S. Lie (see \cite{Geiges} pages 6-11, or  \cite{Montgomery} pages 78-80) as a geometric tool for studying differential equations, marks the birth of contact geometry. Let us recall that given a $2$-dimensional Riemannian manifold $(M,g)$, the space of (oriented) contact elements of $M$ is the unit tangent bundle $UM$, i.e. the set of couples $(x,v)$ where $x\in M$, $v\in T_xM$ and $|v|=1$.
Any regular $M$-valued curve can be lifted to a $UM$-valued one through the maps
\begin{equation}\label{Lie_lift}
    \ell_{\pm}:\gamma \mapsto \left(\gamma,\pm\frac{\dot\gamma}{|\dot\gamma|}\right).
\end{equation}
There exists a contact distribution $\Delta$ over $UM$ naturally associated to the lifts \eqref{Lie_lift}. At a point $(x,v)\in UM$ such distribution is defined as 
\begin{equation*}
    \Delta_{(x,v)}=\langle\{v\}\rangle\oplus T_{v}(U_xM),
\end{equation*}
where we have denoted $U_xM=\{v\in T_xM\,:\,|v|=1\}$.
This distribution characterizes the images of \eqref{Lie_lift}, in the sense that a curve $(\gamma,v):[0,1]\to UM$ is in the image of one of such lifts if and only if it is tangent to $\Delta$ and $\gamma$ is a regular curve.
In this paper we study a second order generalization of the space of contact elements, in particular, given a Riemannian manifold $(M,g)$, we study the geometric structures associated to the lifts 
\begin{equation}\label{proj_curv_lift}
    c_{\pm}:\gamma\mapsto \left(\gamma, \pm\frac{\dot\gamma}{|\dot\gamma|}, R_g(\gamma)\right),
\end{equation}
where $R_g(\gamma)$ is the radius of curvature of $\gamma$ (the definition is recalled below \eqref{eq_rad_curv}).
Such second order construction has affinities with Cartan's prolongation of contact structures (see 6.3 of \cite{Montgomery}).
Recall that, given a Riemannian manifold $(M,g)$ (throughout the whole paper we assume $\dim(M)>1$), the geodesic curvature of a regular curve
$\gamma:[0,T]\to M$ is defined as
\begin{equation}\label{eq_geod_curv}
\kappa_{g}(\gamma):=\frac{|\pi_{\dot\gamma^\perp}\left(D_t\dot\gamma\right)|}{|\dot\gamma|^2},
\end{equation}
where $D_t$ denotes the covariant derivative along $\gamma$, and $\pi_{\dot\gamma^\perp}:T_{\gamma(t)}M\to T_{\gamma(t)}M$ denotes the orthogonal projection to $\{\dot\gamma(t)\}^\perp$.
If the geodesic curvature is never vanishing, we can define the radius of curvature of $\gamma$, computed with respect to $g$, as
\begin{equation}\label{eq_rad_curv}
R_g(\gamma):=\frac{1}{\kappa_g(\gamma)}\frac{\pi_{\dot\gamma^\perp}\left(D_t\dot\gamma\right)}{|\pi_{\dot\gamma^\perp}\left(D_t\dot\gamma\right)|}.
\end{equation}
With the sole purpose of simplifying the exposition and the expression of certain equations, we study the following modified versions of \eqref{proj_curv_lift}
\begin{equation}\label{2nd_ord}
   c_{\pm}:\gamma\mapsto \left(\gamma,\pm\frac{|R_g(\gamma)|}{|\dot\gamma|}\dot\gamma,R_g(\gamma)\right).
\end{equation}
We define the manifold of curvature radii of $(M,g)$, denoted with $\mathcal R(M,g)$, as the space of triples $(x,V,R)$ such that $x\in M$, $R\in T_x M\setminus \{0\}$ and $V \in\{R\}^\perp$, $|V|=|R|$. The map \eqref{2nd_ord} lifts regular $M$-valued curves with never vanishing geodesic curvature to $\mathcal R(M,g)$-valued ones.
The first central result of this work is the following theorem.
\begin{theorem}\label{thm_D}
		Let $(M,g)$ be a smooth n-dimensional Riemannian manifold, and let $\mathcal R(M,g)$ be the corresponding manifold of curvature radii. There exists a smooth, rank-n distribution, $\mathcal D(M,g)$, over $\mathcal R(M,g)$ with the property that a smooth curve $(\gamma,V,R):[0,1]\to \mathcal R(M,g)$ is in the image of one of the lifts \eqref{2nd_ord} if and only if it is tangent to $\mathcal D(M,g)$ and $\gamma$ is a regular curve. A local basis for $\mathcal D(M,g)$ is given by $n$ local vector fields $\{f_1,\dots,f_n\}$, which can be characterized in terms of the ODEs satisfied by their integral curves:
		\begin{equation}\label{eq_frame}
			f_1:\begin{cases}
				\dot x=V,\\
				D_t R=-V,\\
				D_t V=R,
			\end{cases}
			f_2:\begin{cases}
				\dot x=0,\\
				\dot R=R,\\
				\dot V=V,
			\end{cases}
			f_j:\begin{cases}
				\dot x=0,\\
				\dot R=b_j(x,V,R),\\
				\dot V=0,
			\end{cases}
		\end{equation}
		$j=3,\dots,n$, where $\left\{b_3(x,V,R),\dots,b_n(x,V,R)\right\}$ is a norm-$|R|$ local orthogonal basis of $\{R,V\}^{\perp}$. The distribution $\mathcal D(M,g)$ is bracket generating of step 3, it holds:
		\begin{equation}
				\begin{aligned}
					&\mathcal D(M,g)=\langle\{ f_1,\dots,f_n\}\rangle,\\
					&\mathcal D^2(M,g)=\mathcal D(M,g)\oplus \langle\{f_{1k}\}_{k=2}^n\rangle,\\
					&\mathcal D^3(M,g)=\mathcal D^2(M,g)\oplus \langle\{f_{1k1}\}_{k=2}^n\rangle=T\mathcal R(M,g).
				\end{aligned}
			\end{equation}
\end{theorem}
The fields $f_1,f_2$ described in equation \eqref{eq_frame} are closely related to the geometry of $(M,g)$. 
Indeed, as shown in Section \ref{secf1f2}, many geometric features of the original Riemannian manifold can be recovered considering their Lie brackets. The first bracket $[f_1,f_2]$ gives us back the geodesic flow of $(M,g)$ (see Proposition \ref{prop_Factor}); in a way these fields factorize the geodesic flow. Moreover we can read the Riemann curvature tensor in the structure constant of the frame \eqref{eq_frame} (see Proposition \ref{prop_riemann}).
\subsection{Metric structures}
The knowledge of the curvature radii of curves of a Riemannian manifold $(M,g)$, is enough to characterize the metric $g$ up to a homothetic  transformation, indeed, as shown in Section \ref{secMain}, two Riemannian manifolds having the same curvature radii are homothetic (see Definition \ref{same_R} for the precise statement).
\begin{theorem}\label{thm_R_info}
Two Riemannian manifolds $(M,g)$ and $(N,\eta)$ have the same curvature radii if and only if they are homothetic .
\end{theorem}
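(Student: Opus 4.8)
The plan is to prove both implications, treating the forward direction (same curvature radii $\Rightarrow$ homothetic) as the substantial one. For the converse I would first record the scale-invariance of the radius of curvature: a direct computation from \eqref{eq_geod_curv}--\eqref{eq_rad_curv} shows that replacing $g$ by $\lambda g$ (with $\lambda>0$ constant) leaves the Levi-Civita connection, hence $D_t\dot\gamma$ and the projection $\pi_{\dot\gamma^\perp}$, unchanged, while it scales $\kappa_g$ by $\lambda^{-1/2}$ and the chosen unit normal by the reciprocal factor; the two cancel and one obtains $R_{\lambda g}(\gamma)=R_g(\gamma)$ as a vector. Combining this with the naturality of the construction under isometries, if $\phi\colon M\to N$ is a homothety, $\phi^*\eta=\lambda g$, then viewing $\phi\colon(M,\lambda g)\to(N,\eta)$ as an isometry gives $R_\eta(\phi\circ\gamma)=d\phi\,R_{\lambda g}(\gamma)=d\phi\,R_g(\gamma)$ for every admissible $\gamma$, which is exactly the assertion that $(M,g)$ and $(N,\eta)$ have the same curvature radii in the sense of Definition \ref{same_R}.

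For the forward direction, suppose the two manifolds have the same curvature radii, so that Definition \ref{same_R} furnishes a diffeomorphism $\phi\colon M\to N$ together with an induced diffeomorphism $\Phi\colon\mathcal R(M,g)\to\mathcal R(N,\eta)$ carrying lifts to lifts, $\Phi\circ c_\pm^{(M,g)}=c_\pm^{(N,\eta)}\circ\phi$. The first step is to show that $\Phi$ preserves the two canonical fields. Indeed, the integral curves of $f_1$ are precisely the lifts \eqref{2nd_ord} (their defining ODE in \eqref{eq_frame_intro} is the Frenet system produced by $c_\pm$), and the flow of $f_2$ is the fibrewise dilation $(x,V,R)\mapsto(x,e^tV,e^tR)$; both are intrinsic to the lift structure, so one checks $\Phi_* f_1^{(M,g)}=f_1^{(N,\eta)}$ and $\Phi_* f_2^{(M,g)}=f_2^{(N,\eta)}$. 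Since Lie brackets are natural under diffeomorphisms, $\Phi$ then intertwines every iterated bracket, in particular $f_{21}$ and $f_{121}$.

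The second step extracts the geometry from Proposition \ref{prop_Factor}. Intertwining the flows of $f_{21}$ and $f_{121}$ with $\Phi$ and projecting by $\pi^N$ gives
\begin{equation*}
\phi\bigl(\exp_x^{(M,g)}(tV)\bigr)=\exp_{\phi(x)}^{(N,\eta)}(t\,\Phi_V),\qquad
\phi\bigl(\exp_x^{(M,g)}(tR)\bigr)=\exp_{\phi(x)}^{(N,\eta)}(t\,\Phi_R),
\end{equation*}
where $(\Phi_V,\Phi_R)$ denotes the fibre part of $\Phi(x,V,R)$. Differentiating at $t=0$ forces $\Phi_V=d\phi_x V$ and $\Phi_R=d\phi_x R$, so $\Phi=(\phi,d\phi,d\phi)$; rereading the identities gives $\phi\circ\exp_x^{(M,g)}=\exp_{\phi(x)}^{(N,\eta)}\circ\,d\phi_x$, whence $\phi$ is affine, $\phi^*\nabla^\eta=\nabla^g$. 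On the other hand, $\Phi(x,V,R)\in\mathcal R(N,\eta)$ forces $d\phi_x V\perp_\eta d\phi_x R$ and $|d\phi_x V|_\eta=|d\phi_x R|_\eta$; since $(V,R)$ ranges over all $g$-orthogonal pairs of equal $g$-length (here $\dim M>1$ is used), evaluating on an orthonormal basis shows each $d\phi_x$ is a similarity, i.e. $\phi$ is conformal, $\phi^*\eta=\lambda g$ for an a priori non-constant $\lambda>0$.

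The final step is the classical rigidity that a map which is simultaneously affine and conformal is a homothety. Writing $\lambda=e^{2\sigma}$ and inserting $\phi^*\eta=e^{2\sigma}g$ into the conformal transformation law for the Levi-Civita connection, the condition that the connection be unchanged yields $(X\sigma)Y+(Y\sigma)X-g(X,Y)\,\nabla\sigma=0$ for all $X,Y$; choosing $X=Y$ of unit length orthogonal to $\nabla\sigma$ (possible since $\dim M>1$) forces $\nabla\sigma=0$, so $\lambda$ is constant and $\phi$ is a homothety. I expect the main obstacle to be the first step: converting the bare hypothesis of Definition \ref{same_R} into the clean statement $\Phi_* f_i=f_i$ for $i=1,2$, that is, verifying that $f_1$ and $f_2$ are genuinely reconstructible from the lift structure alone and that $\Phi$ covers a base diffeomorphism $\phi$. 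Once this invariance is secured, Proposition \ref{prop_Factor} does the heavy lifting and the remaining geometry is routine; the sectional-curvature identity $c_1(x,V,R)=|R|^2\sec(R,V)$ furnishes an alternative confirmation, since preservation of $c_1$ together with conformality again pins the conformal factor to a constant.
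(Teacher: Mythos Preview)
Your converse direction is fine and matches the paper. The forward direction, however, has a genuine gap precisely where you locate it, and it is more serious than you suggest.

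You write that ``the integral curves of $f_1$ are precisely the lifts \eqref{2nd_ord}''. This is false: every integral curve of $f_1$ is a lift, but only very special lifts are integral curves of $f_1$---namely those for which the base curve has \emph{constant} geodesic curvature and is parametrized so that $|\dot\gamma|=|R|$ (equivalently, those with controls $u_2=\cdots=u_n=0$ in \eqref{eq_fund_syst}). Thus, to conclude $\Phi_\star f_1=f_1$ you would need to know that $\phi$ sends $g$-constant-curvature curves to $\eta$-constant-curvature curves and respects the parametrization $|\dot\gamma|_g=|R|_g\Rightarrow|\dot{(\phi\circ\gamma)}|_\eta=|\phi_\star R|_\eta$. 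Both facts already require that $\phi$ be conformal, which is what you are trying to prove; the argument is circular as it stands. There is a related difficulty with the very definition of $\Phi$: Definition \ref{same_R} gives only $\phi$ and the relation $R_\eta(\phi\circ\gamma)=\phi_\star R_g(\gamma)$; it does not hand you a map on $\mathcal R(M,g)$. The natural candidate $(\phi,\phi_\star,\phi_\star)$ is not known to land in $\mathcal R(N,\eta)$ until conformality is established, and the alternative ``lift-to-lift'' description does not obviously define a pointwise map.

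The paper avoids this circle by working directly with curves. From the bare identity $R_g=R_\eta$ (after pulling back along $\phi$) it first reads off $\perp_g=\perp_\eta$, then uses Remark \ref{rmk_curv} (equality of domains forces $g$-geodesics and $\eta$-geodesics to coincide up to reparametrization) to obtain $\nabla^\eta_X X-\nabla^g_X X\propto X$, hence $\pi_{\dot\gamma^\perp}D_t^g\dot\gamma=\pi_{\dot\gamma^\perp}D_t^\eta\dot\gamma$. Feeding this back into $R_g=R_\eta$ yields $|Y|_g/|X|_g=|Y|_\eta/|X|_\eta$ for all orthogonal $X,Y$, i.e.\ conformality $\eta=e^{2f}g$; your final rigidity step (affine $+$ conformal $\Rightarrow$ homothety) then coincides with the paper's. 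If you want to rescue your strategy via Proposition \ref{prop_Factor}, you would first have to establish conformality by an argument of this direct sort, at which point $\Phi=(\phi_\star\oplus\phi_\star)$ is well defined and $\Phi_\star f_i=f_i$ follows as in the proof of Theorem \ref{thm_sym}; but then Proposition \ref{prop_Factor} is no longer doing any work.
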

It is then natural to endow the distribution $\mathcal D(M,g)$ with a metric which is invariant under the action of the homothety group of $(M,g)$. In Section \ref{secMain} we show the existence of a family of metrics $\eta_{a,b}$ on $\mathcal D(M,g)$, parametrized by two real numbers $a$ and $b$, having this invariance property. The triple $(\mathcal R(M,g), \mathcal D(M,g),\eta_{a,b})$ is a sub-Riemannian manifold (\cite{Agrachev}, \cite{Montgomery}), which we denote with $\mathcal R_{a,b}(M,g)$.
For any $(\gamma, R, V)$ in the image of the lift \eqref{2nd_ord} the metric $\eta_{a,b}$ satisfies the following equation
\begin{equation}
\begin{aligned}
\left|\frac{d}{dt}(\gamma,V,R)\right|^2_{\eta_{a,b}}=a^2\frac{|\dot\gamma|^2}{|R|^2}+b^2\frac{|D_tR|^2}{|R|^2}.
\end{aligned}
\end{equation}
The central result regarding these metrics is stated in the following theorem.
\begin{theorem}\label{thm_sym}
Let $(M,g)$ be a Riemannian manifold, let $a,b\in\mathbb R,b>0$, and let $\mathcal R_{a,b}(M,g)$ be the corresponding sub-Riemannian manifold of curvature radii. The following map is a group isomorphism
\begin{equation}
\begin{aligned}\label{eq_group}
Homothety(M,g)&\to Isometry(\mathcal R_{a,b}(M,g))\\
\varphi&\mapsto (\varphi_\star\oplus\varphi_{\star})_{|\mathcal R(M,g)}.
\end{aligned}
\end{equation}
In particular, if $(N,\eta)$ is another Riemannian manifold, $\mathcal R_{a,b}(M,\eta)$ is isometric to  $\mathcal R_{a,b}(M,g)$ if and only $(N,\eta)$ and $(M,g)$ have the same curvature radii.
\end{theorem}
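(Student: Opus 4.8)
The plan is to verify, in turn, that $\Phi_\varphi:=(\varphi_\star\oplus\varphi_\star)_{|\mathcal R(M,g)}$ is a well-defined sub-Riemannian isometry, that $\varphi\mapsto\Phi_\varphi$ is a homomorphism, that it is injective, and that it is surjective; the final clause then follows by running the construction between two manifolds. The engine of the whole argument is the equivariance of the lifts \eqref{2nd_ord} under homotheties. Writing $\varphi^\star g=\lambda^2 g$ with $\lambda>0$, I would first record that a homothety is affine for the Levi--Civita connection (a constant conformal factor leaves the Christoffel symbols unchanged, so $\varphi_\star D_t=D_t\varphi_\star$) and scales lengths by $\lambda$. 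A short computation then gives $\kappa_g(\varphi\circ\gamma)=\lambda^{-1}\kappa_g(\gamma)$ and the clean transformation rule $R_g(\varphi\circ\gamma)=\varphi_\star R_g(\gamma)$, whence
\[
(\varphi_\star\oplus\varphi_\star)\circ c_\pm(\gamma)=c_\pm(\varphi\circ\gamma).
\]

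Given this identity the first three claims are routine. Since $\varphi_\star$ scales norms by $\lambda$ and preserves orthogonality, it preserves the defining constraints $R\neq0$, $V\perp R$, $|V|=|R|$, so $\Phi_\varphi$ is a diffeomorphism of $\mathcal R(M,g)$. Preservation of $\mathcal D(M,g)$ I would check directly on the frame \eqref{eq_frame_intro}: affineness makes $\Phi_\varphi$ carry integral curves of $f_1$ and of $f_2$ to integral curves of the same fields, while it sends each $f_j$ into $\langle f_3,\dots,f_n\rangle$ (because $\varphi_\star$ maps a norm-$|R|$ orthogonal basis of $\{R,V\}^\perp$ to a norm-$|\varphi_\star R|$ orthogonal basis of $\{\varphi_\star R,\varphi_\star V\}^\perp$), so $(\Phi_\varphi)_\star\mathcal D=\mathcal D$. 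For the metric, the lifts span $\mathcal D$ on the dense open set where $\pi^M_\star\neq0$, so $\eta_{a,b}$ is determined by the displayed norm formula; evaluating it along $c_\pm(\varphi\circ\gamma)$ and using that $|\dot\gamma|$, $|R|$ and $|D_tR|$ all scale by $\lambda$, the factor $\lambda^2$ cancels, so $\Phi_\varphi$ is an isometry of $\eta_{a,b}$. The homomorphism property is the chain rule $(\varphi\circ\psi)_\star=\varphi_\star\circ\psi_\star$, and injectivity is immediate, since $\Phi_\varphi=\mathrm{id}$ forces $\varphi(x)=x$ for every base point $x$ (these exhaust $M$ as $\dim M>1$), hence $\varphi=\mathrm{id}_M$.

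The heart of the theorem is surjectivity: every isometry $\Psi$ of $\mathcal R_{a,b}(M,g)$ equals some $\Phi_\varphi$. The strategy is to reconstruct both $M$ and $\varphi$ from the sub-Riemannian data. An isometry preserves $\mathcal D$, the entire flag $\mathcal D\subset\mathcal D^2\subset\mathcal D^3=T\mathcal R$, the induced Levi bracket $\wedge^2\mathcal D\to\mathcal D^2/\mathcal D$, and $\eta_{a,b}$. From the bracket relations of Theorem \ref{thm_D} one extracts an intrinsic description of the privileged line $\langle f_1\rangle$ and of the complementary subdistribution $\langle f_2,\dots,f_n\rangle$: the growth of the flag is governed by bracketing with $f_1$, and the lifts being its integral curves force $|f_1|^2_{\eta_{a,b}}=a^2+b^2$ to be constant, which (with the orthogonality encoded in $\eta_{a,b}$) pins $\Psi_\star f_1=\pm f_1$. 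Invariance of $f_1$ propagates to its iterated brackets; in particular $\Psi$ preserves $f_{21}$ and $f_{121}$, whose flows recover the geodesics of $(M,g)$ in the directions $V$ and $R$ by Proposition \ref{prop_Factor}. This makes $\Psi$ fibre-preserving for $\pi^M$, so it descends to a diffeomorphism $\varphi$ of $M$ taking geodesics to geodesics; preservation of $\eta_{a,b}$ together with the curvature invariant $c_1(x,V,R)=|R|^2\seccurv(R,V)$ then forces $\varphi^\star g$ to be a constant multiple of $g$, i.e.\ $\varphi\in Homothety(M,g)$, and since $\Psi\circ\Phi_\varphi^{-1}$ covers $\mathrm{id}_M$ and preserves the canonical fields it must be the identity, giving $\Psi=\Phi_\varphi$.

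The main obstacle is exactly this descent: proving that an a priori arbitrary sub-Riemannian isometry must respect the fibration $\pi^M$, equivalently that the canonical fields $f_1,f_{21},f_{121}$ are invariants of the pair $(\mathcal D,\eta_{a,b})$ and not merely of a chosen frame. The delicate points are the intrinsic singling-out of $\langle f_1\rangle$ from the Levi bracket and the metric normalization (transparent for $n\geq3$, but requiring separate treatment in low dimension and when $a=0$), and the passage from ``preserves geodesics and the conformal class'' to ``is a homothety''. Granting the isomorphism, the final clause follows by the same argument across two manifolds: a homothety $\varphi\colon(M,g)\to(N,\eta)$ induces a sub-Riemannian isometry $\mathcal R_{a,b}(M,g)\to\mathcal R_{a,b}(N,\eta)$ exactly as above, and conversely any such isometry descends to a homothety $M\to N$; combined with the earlier equivalence between ``same curvature radii'' and ``homothetic'', this shows the two manifolds have the same curvature radii precisely when their curvature-radii sub-Riemannian manifolds are isometric.
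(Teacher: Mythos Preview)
Your treatment of well-definedness, the homomorphism property, and injectivity is essentially the paper's, and your identification of surjectivity as the crux is correct. The gap is in how you make the isometry $\Psi$ fibre-preserving for $\pi^M$. You assert that ``invariance of $f_1$ propagates to its iterated brackets; in particular $\Psi$ preserves $f_{21}$ and $f_{121}$,'' and then use these to descend to $M$. But $f_{21}=[f_2,f_1]$, so preservation of $f_{21}$ requires $\Psi_\star f_2=\pm f_2$, which you never establish: knowing only $\Psi_\star f_1=\pm f_1$ and $\Psi_\star\langle f_2,\dots,f_n\rangle=\langle f_2,\dots,f_n\rangle$ allows $\Psi_\star f_2$ to be any unit combination of $f_2,\dots,f_n$. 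Your heuristic (``growth of the flag'' plus the metric normalisation) can isolate the line $\langle f_1\rangle$, but it does not single out $f_2$ inside its orthogonal complement, and therefore cannot yield preservation of $f_{21}$ or of the fibres of $\pi^M$. The argument as written is circular at exactly the point you flag as ``the main obstacle.''

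The paper resolves this in the opposite order: it first proves that $\ker\pi^M_\star$ is intrinsically characterised as the \emph{unique} integrable sub-bundle of $\mathcal D^2(M,g)$ of maximal rank $2n-2$ (for $n>2$), so any isometry must preserve it; intersecting with $\mathcal D$ then gives $\langle f_2,\dots,f_n\rangle$, and orthogonality yields $\Psi_\star f_1=\pm f_1$. Only \emph{after} $\ker\pi^M_\star$ is known to be preserved does one isolate $\langle f_3,\dots,f_n\rangle$ as the kernel of $X\mapsto \pi^M_\star[f_1,X]$ on $\langle f_2,\dots,f_n\rangle$, whence $\Psi_\star f_2=\pm f_2$. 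For $n=2$ the maximal-integrable argument fails (every line field is integrable), and the paper instead uses the Engel characteristic direction: $\langle f_2\rangle$ is the kernel of $X\mapsto[X,[f_1,f_2]]\bmod\mathcal D^2$, and a separate bracket identity rules out the sign $\Psi_\star f_2=-f_2$. Finally, the paper does not conclude that $\varphi$ is a homothety via the sectional-curvature invariant $c_1$; it shows $\Phi=\varphi_\star\oplus\varphi_\star$, deduces $R_g(\varphi\circ\gamma)=\varphi_\star R_g(\gamma)$ from preservation of $\mathcal D$, and then invokes Theorem~\ref{thm_R_info}. Your route through $c_1$ and ``preserves geodesics and the conformal class'' would need substantial additional work to reach the same conclusion.
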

In Section \ref{secR2} we show that the sub-Riemannian manifolds $\mathcal R_{a,b}(\mathbb R^2,g_e)$, where $g_e$ is the standard Euclidean metric, are all isometric to left-invariant sub-Riemannian structures on the group orientation preserving homothetic transformations of $(\mathbb R^2,g_e)$, and we give a characterization of their geodesics. These geometries are related to the left-invariant sub-Riemannian structure on the group of rigid motions of $\mathbb R^2$  (\cite{bicycle1}, \cite{bicycle2}, \cite{bicycle3}, \cite{bicycle4}).
\subsection{Structure of the paper}
In Section \ref{secSurf} we describe the sub-Riemannian manifold of curvature radii in the $2$-dimensional case. In Section \ref{secMain} we generalize the constructions of Section $2$ to an arbitrary Riemannian manifold $(M,g)$ and we prove Theorems \ref{thm_D}, \ref{thm_R_info}, \ref{thm_sym}. In Section \ref{secf1f2} we show that taking Lie brackets of the fields $f_1,f_2$, mentioned in the abstract, we can reconstruct the geodesic flow of the original Riemannian manifold and its Riemann curvature tensor. Finally in Section \ref{secR2} we study the manifold of curvature radii of $\mathbb R^2$ endowed with an homothetic invariant metric.
	\section{Curvature Radii on surfaces}\label{secSurf}
	Let $(M,g)$ be a 2-dimensional Riemannian manifold, which for simplicity we assume to be oriented. Let $\gamma:[0,1]\to M$ be an arc-leength parametrized curve, then its curvature can be computed as 
\begin{equation}
    k_g(\gamma)=|D_t\gamma|,
\end{equation}
and, provided that $k_g(\gamma)$ is never vanishing, its radius of curvature is 
\begin{equation}
    R_g(\gamma)=\frac{1}{k_g(\gamma)^2}D_t\gamma.
\end{equation}
For every $t\in[0,1]$, the curvature radius of $\gamma$ at the point $\gamma(t)$ is a non-zero tangent vector, for this reason we define the \textit{manifold of curvature radii of $(M,g)$} as $\mathcal R(M)=TM\setminus s_o$, where $s_o$ is the zero-section of $TM$. Every regular curve with non-vanishing curvature can be lifted to a $\mathcal R(M)$-valued curve through the radius of curvature map 
	\begin{equation}\label{2dim-lift}
		\gamma\mapsto (\gamma, R_g(\gamma)).
	\end{equation}
    We would like to find a distribution $\mathcal D(M,g)$ over $\mathcal R(M)$ characterizing the image of such lift, in the sense that a curve $(\gamma, R):[0,1]\to \mathcal R(M)$ is in the image of \eqref{2dim-lift} if and only if it is tangent to $\mathcal D(M,g)$ and $\gamma$ is regular. To build such a distribution at a point $(x_0,R_0)\in \mathcal R(M)$ we collect all the velocities of all radii of curvature going through this point, and we take the vector space generated by them
	\begin{equation}\label{2dim_D}
		\mathcal D(M,g)_{(x_0,R_0)}:=\left\langle\left\{\frac{d}{dt}(\gamma,R_g(\gamma))(0)\,:\,\gamma(0)=x_0,\,R_g(\gamma)(0)=R_0\right\}\right\rangle.
	\end{equation}
	Since $M$ is oriented we have a complex strcuture
	\begin{equation}\label{perp_map}
		\begin{aligned}
			^{\perp_g}:TM\setminus s_o&\to TM \setminus s_o\\
			(x,R)&\mapsto (x,R^{\perp_g})
		\end{aligned}
	\end{equation}
	where $R^{\perp_g}$ is the unique vector orthogonal to $R$, positively oriented with it, satisfying $|R|_g=|R^{\perp_g}|_g$.
	\begin{prop}\label{prop_2dim}
		The collection of vector spaces defined in \eqref{2dim_D} is a smooth Engel distribution over $\mathcal R(M)$ (for basic facts on Engel distributions see for instance \cite{Bryant}, chapter 2). Moreover a curve $(\gamma,R):[0,T]\to \mathcal R(M)$ is in the image of the lift \eqref{2dim-lift}
		if and only if it is tangent to $\mathcal D(M,g)$ and $\gamma$ is a regular curve. A basis for $\mathcal D(M,g)$ is given by two vector fields $f^g_1,f^g_2$, which are characterized in terms of the ODEs satisfied by their integral curves as
		\begin{equation}\label{2dim_frame}
			f^g_1:\,\begin{cases}
				\dot x=R^{\perp_g},\\
				D_t R=-R^{\perp_g},
			\end{cases}
			f^g_2:\,\begin{cases}
				\dot x=0,\\
				\dot R=R,
			\end{cases}
		\end{equation}
		where $D_tR$ denotes the covariant derivative of $R(t)$ along the curve $x(t)$.
	\end{prop}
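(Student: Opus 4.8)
The plan is to prove the three assertions in order: first identify $\mathcal D(M,g)$ with $\langle f_1^g,f_2^g\rangle$, then deduce the Engel property as the $n=2$ case of Theorem \ref{thm_D}, and finally settle the lift-characterization by a direct computation with the frame \eqref{2dim_frame}.

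\textbf{Identifying the distribution.} First I would compute the velocity of a lift. Parametrizing a competitor $\gamma$ by arclength and writing $R_g(\gamma)=\tfrac1\kappa N$ with $N$ the principal normal and $\kappa=\kappa_g$, the Frenet relations $D_sT=\kappa N$, $D_sN=-\kappa T$ together with $R^{\perp_g}=-\tfrac1\kappa T$ give
\[
\tfrac{d}{ds}\bigl(\gamma,R_g(\gamma)\bigr)=-\kappa\,f_1^g-\tfrac{\kappa'}{\kappa}\,f_2^g,
\]
so every velocity of a lift lies in $\langle f_1^g,f_2^g\rangle$, a general parametrization merely rescaling this by the speed. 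Conversely, at a fixed $(x_0,R_0)$ the curvature $\kappa=1/|R_0|$ is forced, but the speed and $\kappa'$ are free: constant-curvature curves realize arbitrary multiples of $f_1^g$, while a curve with $\kappa'\neq0$ contributes a nonzero $f_2^g$-component, so the span is the full $2$-plane. Existence of curves realizing a prescribed curvature jet with $R_g(\gamma)(0)=R_0$ is standard (solve the Frenet ODE with initial direction $T(0)=-R_0^{\perp_g}/|R_0^{\perp_g}|$). Smoothness of $f_1^g,f_2^g$, hence of the rank-$2$ distribution, is clear since $\perp_g$ and $D_t$ are smooth on $TM\setminus s_o$.

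\textbf{Engel property.} Under the identification $\mathcal R(M)\cong\mathcal R(M,g)$ given by $V=R^{\perp_g}$, the frame \eqref{2dim_frame} is exactly the restriction of the general frame \eqref{eq_frame_intro} (the equation $D_tV=(D_tR)^{\perp_g}=R$ being automatic), so the statement is the $n=2$ instance of Theorem \ref{thm_D}. Concretely, Proposition \ref{prop_Factor} shows that the iterated bracket $[f_1^g,[f_1^g,f_2^g]]$ projects under $\pi^M$ to the horizontal direction $R$, which is independent of $R^{\perp_g}$; and $f_{12}=[f_1^g,f_2^g]$ is independent of $\mathcal D$ because inside $\mathcal D$ the horizontal direction $R^{\perp_g}$ arises only from $f_1^g$ and is there inseparable from its vertical part $-R^{\perp_g}$, whereas $f_{12}$ carries no such vertical component. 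Thus the four fields $f_1^g,f_2^g,[f_1^g,f_2^g],[f_1^g,[f_1^g,f_2^g]]$ frame the $4$-dimensional $T\mathcal R(M)$, the growth vector is $(2,3,4)$, and $\mathcal D(M,g)$ is Engel.

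\textbf{Characterization of admissible curves.} The forward implication is immediate from the displayed velocity formula. For the converse, suppose $(\gamma,R)$ is tangent to $\mathcal D(M,g)$ with $\gamma$ regular, and write $\tfrac{d}{dt}(\gamma,R)=u_1f_1^g+u_2f_2^g$. Reading off components gives $\dot\gamma=u_1R^{\perp_g}$, so regularity forces $u_1\neq0$, and $D_tR=-u_1R^{\perp_g}+u_2R$. Since $\dot\gamma\parallel R^{\perp_g}$, the normal line $\dot\gamma^{\perp}$ is spanned by $R$; using $(R^{\perp_g})^{\perp_g}=-R$ one finds $D_t\dot\gamma=(\dot u_1+u_1u_2)R^{\perp_g}+u_1^2R$, whence $\pi_{\dot\gamma^\perp}(D_t\dot\gamma)=u_1^2R$. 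Inserting this into \eqref{eq_geod_curv}--\eqref{eq_rad_curv} yields $\kappa_g=1/|R|$ and $R_g(\gamma)=R$, so $(\gamma,R)$ is the lift of $\gamma$.

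\textbf{Main obstacle.} The delicate point is the Engel verification in the curved case, namely tracking the curvature contributions to $[f_1^g,[f_1^g,f_2^g]]$ and confirming that $\mathcal D^2$ has constant rank $3$. The cleanest route is to invoke the general bracket computation of Theorem \ref{thm_D} and Proposition \ref{prop_Factor} rather than to grind out the connection identities by hand.
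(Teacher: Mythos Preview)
Your argument is correct. The paper's own proof is much shorter: it simply declares the proposition a special case of Theorem~\ref{thm_D}, and after that theorem is proved it observes that for an oriented surface the projection $p:\mathcal R(M,g)\to TM\setminus s_o$, $(x,V,R)\mapsto(x,R)$, restricts to a diffeomorphism on each connected component with $p_\star f_i=f_i^g$, so everything (distribution, lift characterization, Engel property) is inherited at once from the general $n$-dimensional statement.

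Your route is different in that you carry out the $2$-dimensional computations directly with the Frenet apparatus rather than by specializing the abstract frame \eqref{eq_frame}. Your velocity formula $\tfrac{d}{ds}(\gamma,R_g(\gamma))=-\kappa f_1^g-\tfrac{\kappa'}{\kappa}f_2^g$ and your converse computation $\pi_{\dot\gamma^\perp}(D_t\dot\gamma)=u_1^2R$ are correct and make the $2$-dimensional picture explicit, which the paper never does. The cost is that you still need Theorem~\ref{thm_D} (and you also reach forward to Proposition~\ref{prop_Factor}) for the bracket structure, so your proof is not logically more elementary; the paper's approach, by contrast, proves everything once in arbitrary dimension and then transports the result via $p_\star$, avoiding any separate $2$-dimensional calculation. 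One small remark: your claim $R^{\perp_g}=-\tfrac1\kappa T$ fixes a sign that in fact depends on the orientation of the curve relative to that of $M$; this does not affect the inclusion $\mathcal D\subset\langle f_1^g,f_2^g\rangle$, but it is the reason the paper works on one component of $\mathcal R(M,g)$ at a time.
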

	\begin{proof}
		It is a special case of Theorem \ref{thm_D}, which is proved in Section \ref{secMain}.
	\end{proof}
	What is the geometric significance of the fields $f_1^g,f_2^g$?  It is quite explicit that the integral curves of $f_2^g$ are dialation of $R$ in a fixed fiber, i.e. curves of the kind $t\mapsto (x_0,e^tR_0)$, whereas if $(\gamma,R):[0,1]\to\mathcal R(M)$ is an integral curve of $f_1^g$, as a consequence of Proposition \ref{prop_2dim}, $R$ is the radius of curvature of $\gamma$, and hence $\kappa_g(\gamma)=1/|R|_g$. On the other hand according to \eqref{2dim_frame} $D_tR=-R^{\perp_g}$, therefore 
	\begin{equation*}
		\frac{d}{dt}|R|^2_g=-2\langle R,R^{\perp_g}\rangle_g=0,
	\end{equation*}
	hence $\kappa_g(\gamma)$ is constant. Thus the integral curves of $f_1^g$ are exactly lifts of curves with constant geodesic curvature.
	Homothetic transformations preserve the curvature radius map. 
   It is then natural to endow $\mathcal D(M,g) $ with a metric which is invariant under the action of the homothety group of $(M,g)$. For every $a,b\in\mathbb R$, $b\neq 0$, we define a metric $\eta^g_{a,b}$ on $\mathcal D(M,g)$ by declaring the fields 
	\begin{equation}\label{flat_frame_a,b2}
		\begin{aligned}
			&f_1^{g,a,b}:=\frac{1}{\sqrt{a^2+b^2}}f^g_1,\\
			&f_2^{g,a,b}:=\frac{1}{b}f^g_2,
		\end{aligned}
	\end{equation}
	an orthonormal frame. A simple computation, which is a particular case of \eqref{metric_computation}, shows that if $(\gamma,R)$ is an admissible curve, then the metric $\eta_{a,b}^g$ satisfies the following equation
	\begin{equation}
		\begin{aligned}
			\left|\frac{d}{dt}(\gamma,R)\right|^2_{\eta_{a,b}^g}=a^2\frac{|\dot\gamma|^2}{|R|^2}+b^2\frac{|D_tR|^2}{|R|^2}.
		\end{aligned}
	\end{equation}
The homogeneity of the right hand side shows the homothetic invariant nature of the metric $\eta^g_{a,b}$.
 The triple $(\mathcal R(M), \mathcal D(M,g), \eta_{a,b}^g)$ defines a sub-Riemannian manifold, which we denote with $\mathcal R_{a,b}(M,g)$.
	\section{The sub-Riemannian manifold of curvature radii}\label{secMain}
	We would like to extend the construction of $\mathcal R_{a,b}(M,g)$, presented for surfaces in Section \ref{secSurf}, to an arbitrary Riemannian manifold. One of the advantages of working with surfaces is that, given a regular curve $\gamma$ with never vanishing geodesic curvature, the direction of $\dot\gamma$ is uniquely determined by the radius of curvature. This is no more true when $\dim(M)\geq 3$, and we need to keep track of the velocity's direction in some way. For this reason we cannot  define the manifold of curvature radii as $TM\setminus s_o$ anymore. Instead we define it as a subset of $TM\oplus TM$.
	\begin{defi}
		Let $(M,g)$ be a Riemannian manifold. The manifold of curvature radii of $(M,g)$, denoted by $\mathcal R(M,g)$, is defined by
		\begin{equation}\label{R(M,g)}
			\mathcal R(M,g):=\{(x,V,R)\in TM\oplus TM\,:\,\langle R,V\rangle_g=0,\,|R|_g=|V|_g>0\}.
		\end{equation}
	\end{defi}
	From now on, when the metric $g$ we are referring to is clear from the context, we drop the $g$ subscript, for instance we denote $|\cdot|=|\cdot|_g$, $\perp=\perp_g$ and so on.
	One can check that, if $M$ is an $n$-dimensional manifold, then $\mathcal R(M,g)$ is a smooth embedded sub-manifold of $TM\oplus TM$ of dimension $3n-2$. Moreover $\mathcal R(M,g)$ has the structure of a $\mathbb S^{n-2}$ bundle over $TM\setminus s_o$, where the fiber at $(x,R)\in TM\setminus s_o $ is the sphere of radius $|R|$ contained in the plane $R^{\perp}$,
	\begin{equation}\label{sphere_fibers}
		\mathcal R(M,g)_{(x,R)}=\{V\in T_xM\,:\,V\perp R,\,\,|V|=|R|\,\}\simeq \mathbb S^{n-2}.
	\end{equation}
	As expected, if $n=2$ these spheres have dimension $0$, and $\mathcal R(M,g)$ reduces to a 0-dimensional fibration over $TM\setminus s_o$. If $M$ is a surface, oriented for simplicity, we have
	\begin{equation*}
	\mathcal R(M,g)\cong TM\setminus s_o\cupdot TM\setminus s_o.
	\end{equation*}
	In general $\mathcal R(M,g)$ has also the structure of a fiber bundle over $M$, whose fibers are 
	\begin{equation*}
		\mathcal R(M,g)_x=\{(V,R)\in T_xM\oplus T_xM\,:\,\langle R,V\rangle=0,\,|R|=|V|>0\,\}.
	\end{equation*}
	There exist two lifts which allows us to map regular curves $\gamma:[0,1]\to M$ with never vanishing geodesic curvature, to $\mathcal R(M,g)$-valued curves 
	\begin{equation}\label{general_lift}
		c_{\pm}:\gamma\mapsto \left(\gamma, \pm|R_g(\gamma)|\frac{\dot\gamma}{|\dot\gamma|}, R_g(\gamma)\right),
	\end{equation}
	where $R_g$ is the map defined in \eqref{eq_rad_curv}.
	We would like to find a distribution characterizing the image of the lift \eqref{general_lift}, i.e. a distribution  $\mathcal D(M,g)$ such that $(\gamma,V,R):[0,1]\to \mathcal R(M,g)$ is the lift of some curve $\gamma$ if and only if it is tangent to $\mathcal D(M,g)$, and $\gamma$ is regular. To construct this distribution at $q_0:=(x_0,V_0,R_0)\in\mathcal R(M,g)$ we collect the velocities of all such curves going through this point, and we take the vector space generated by them:
	\begin{equation}\label{def_dist}
		\mathcal D(M,g)_{q_0}:=\left\langle\left\{\frac{d}{dt}(\gamma, R,V)(0)\,:\,(\gamma,V,R)(0)=q_0, (\gamma,V,R)\,\text{as in}\,\eqref{general_lift}  \,\right\}\right\rangle.
	\end{equation}
	Before moving to the proof of Theorem \ref{thm_D}, let us fix the notation 
	\begin{equation*}
	    f_{i_1i_2\dots i_k}=[f_{i_1},[f_{i_2},[\dots f_{i_k}]]\dots],
	\end{equation*}
    to indicate the iterated brackets of the fields $f_1,\dots,f_n$.
   Moreover we will often make use of the abbreviation 
   \begin{equation}\label{eq_nota}
	X\partial_x=\sum_{i=1}^nX^i\partial_{x^i}.
	\end{equation}
   where $(x^1,\dots,x^n)$ are local coordinates and $X\in \mathbb R^n$ is the vector $X=(X^1,\dots,X^n)$.
		\begin{proof}(Theorem \ref{thm_D}) 
			The vectorfields in \eqref{eq_frame} are, by definition, local sections of $T(TM\oplus TM)$. We need to show that they are actually tangent to $\mathcal R(M,g)\subset TM\oplus TM$.
			Let $(x_0,V_0,R_0)\in\mathcal R(M,g)$ and let $(x,V,R)(t)$ be an integral curve of $f_i$ with initial point $(x_0,V_0,R_0)$. We have to show that $\langle V(t), R(t)\rangle= 0$ and that $|V(t)|=|R(t)|$ for each $t$ such that the flow of $f_i$ is defined. Let us start with $f_1$, any of its integral curves satisfies 
			\begin{equation*}
				\begin{cases}
					\dot x=V,\\
					D_t R=-V,\\
					D_t V=R.
				\end{cases}
			\end{equation*}
			Therefore we have 
			\begin{equation*}
				\begin{aligned}
					\frac{d}{dt}\langle R, V\rangle=|R|^2-|V|^2,\,\,\,\frac{d}{dt}|R|^2-|V|^2=-4\langle R, V\rangle,
				\end{aligned}
			\end{equation*}
			but $|R(0)|^2-|V(0)|^2=0=\langle R(0), V(0)\rangle$, because $(x,V,R)(0)\in\mathcal R(M,g)$, then from uniqueness of ODEs solutions we obtain $|V(t)|-|R(t)|\equiv\langle V(t), R(t)\rangle\equiv 0$. Let $(x, R, V)(t)$ be an integral curve of $f_2$ with initial point in $\mathcal R(M,g)$, then from \eqref{eq_frame} we compute 
			\begin{equation*}
				\frac{d}{dt}\left(|R|^2-|V|^2\right)=2\left(|R|^2-|V|^2\right),\,\,\,\,\frac{d}{dt}\langle R,V\rangle=2\langle R,V\rangle.
			\end{equation*}
			Since $(\gamma,V,R)(0)\in\mathcal R(M,g)$, we deduce $|V(t)|-|R(t)|=\langle V(t), R(t)\rangle\equiv 0$. Let $(\gamma, R, V)(t)$ be an integral curve of $f_j$, $j=3,\dots, n$, then from \eqref{eq_frame} we have 
			\begin{equation*}
				\frac{d}{dt}\left(|R|^2-|V|^2\right)=0,\,\,\,\,\frac{d}{dt}\langle R,V\rangle=0,
			\end{equation*}
			hence $f_j$ is tangent to $\mathcal R(M,g)$.
			\newline
			Now we claim that a curve $(\gamma,V,R):[0,1]\to \mathcal R(M,g)$ is in the image of one of the lifts \eqref{general_lift} if and only if it is tangent to $\langle\{ f_1,\dots,f_n\}\rangle$, and $\gamma$ is regular. Notice that, by definition of $\mathcal D(M,g)$, this would imply $\mathcal D(M,g)=\langle\{ f_1,\dots,f_n\}\rangle$. To prove our claim we have to show that $(\gamma,V,R)$ is in the image of one of the lifts \eqref{general_lift} if an only if there exist $n$ smooth functions, $u_1,\dots,u_n:[0,1]\to\mathbb R$ , with $u_1\neq 0$, such that $\frac{d}{dt}(\gamma, R,V)=u_1f_1+\dots+u_nf_n$, or in other words, according to \eqref{eq_frame}
			\begin{equation}\label{eq_fund_syst}
				\begin{cases}
					\dot\gamma=u_1V,\\
					D_t R=-u_1V+u_2R+u_3b_3+\dots+u_nb_n,\\
					D_t V=u_1R+u_2V.
				\end{cases}
			\end{equation}
			Assume that $\sigma:=(\gamma,V,R):[0,1]\to\mathcal R(M,g)$ is in the image of \eqref{general_lift}, then, by definition $R=R_g(\gamma)$ and $\dot\gamma=u_1 V$, for some never vanishing smooth function $u_1:[0,1]\to\mathbb R$. Since the condition $\dot\sigma\in \langle \{f_1,\dots,f_n\}\rangle$ is independent of the parametrization, without loss of generality we may assume $|\dot\gamma|=1$, i.e. $u_1=1/|R|$. Since $\gamma$ is arc-length parametrized, as a consequence of formulae \eqref{eq_rad_curv} and \eqref{eq_geod_curv}, we have
			\begin{equation*}
				D_t\dot\gamma=\frac{1}{|R|^2}R.
			\end{equation*}
			On the other hand, since $\dot\gamma=1/|R|V$, we have 
			\begin{equation}\label{eq_D_tV}
				D_t\dot\gamma=D_t\frac{V}{|R|}=\frac{d}{dt}\left(\frac{1}{|R|}\right)V+\frac{1}{|R|}D_tV,
			\end{equation}
			therefore
			\begin{equation*}
				D_tV=\frac{1}{|R|}R-|R|\frac{d}{dt}\left(\frac{1}{|R|}\right)V=u_1R+u_2V,
			\end{equation*}
			where we have denoted $u_2:=-|R|\frac{d}{dt}1/|R|=\frac{d}{dt}\log{|R|}$.
			It remains to compute the covariant derivative of $R$ along $\gamma$. Notice that $\{V(t),R(t),b_3(t),\dots,b_n(t)\}$ is a basis for $T_{\gamma(t)}M$ for every $t\in[0,1]$, therefore there exist $\lambda_1,\dots,\lambda_n:[0,1]\to\mathbb R$ smooth functions such that 
			\begin{equation*}
				D_tR=\lambda_1 V+\lambda_2 R+\lambda_3 b_3+\dots\lambda_n b_n.
			\end{equation*}
			To prove that \eqref{eq_fund_syst} holds we just have to show that $\lambda_1=-u_1$, $\lambda_2=u_2$.
			We begin with $\lambda_1$:
			\begin{equation*}
				\begin{aligned}
					\lambda_1=\langle D_t R, \frac{V}{|V|^2}\rangle=-\frac{1}{|R|^2}\langle R, D_t V\rangle
					=-\frac{1}{|R|^2}\langle R, u_1R\rangle=-u_1.
				\end{aligned}
			\end{equation*}
			Concerning the second coefficient $\lambda_2$ we have 
			\begin{equation*}
				\begin{aligned}
					\lambda_2&=\langle D_t R, \frac{R}{|R|^2}\rangle=\frac{1}{2|R|^2}\frac{d}{dt}|R|^2=\frac{1}{2}\frac{d}{dt}\log{|R|^2}=\frac{d}{dt}\log{|R|}=u_2.
				\end{aligned}
			\end{equation*}
			Conversely let $\sigma=(\gamma,V,R)$ be a curve satisfying \eqref{eq_fund_syst}, with $u_1\neq 0$. We want to show that $R=R_g(\gamma)$, i.e. that $R$ is the radius of curvature of $\gamma$. 
			Exploiting \eqref{eq_fund_syst} we compute
			\begin{equation}\label{pidotgamma}
				\pi_{\dot\gamma^{\perp}}D_t\dot\gamma=\pi_{\dot\gamma^{\perp}}(u_1D_tV+\dot u_1 V)=u_1^2 R,
			\end{equation}
			then, by definition \eqref{eq_rad_curv}, the radius of curvature of $\gamma$ satisfies
			\begin{equation}\label{Rcompuation}
				R_g(\gamma)=\frac{\pi_{\dot\gamma^\perp} D_t\dot\gamma}{\kappa_g(\gamma)|\pi_{\dot\gamma^\perp} D_t\dot\gamma|}=\frac{R}{\kappa_g(\gamma)|R|}.
			\end{equation}
			On the other hand combining \eqref{eq_geod_curv} with \eqref{pidotgamma}, and considering that $|R|=|V|$, we deduce \begin{equation}\label{kcomput}
				\kappa_g(\gamma)=  \frac{|\pi_{\gamma^\perp} D_t\dot\gamma|}{|\dot\gamma|^2}=\frac{u_1^2|R|}{u_1^2|V|^2}=\frac{1}{|R|}.
			\end{equation}
			Substituting \eqref{kcomput} into \eqref{Rcompuation} we obtain $R_g(\gamma)=R$, as requested.\newline
			We now show that $\mathcal D(M,g)$ is an equiregular bracket generating distribution of step 3. To do this we need to be more precise about the definition of the local fields $f_3,\dots,f_n$ in equation \eqref{eq_frame}, in particular given $(x,V,R)\in \mathcal R(M,g)$ we have to make a choice of a local basis for $\{R,V\}^\perp$. Let $\mathcal U\subset M$ be an open subset and let $E_1,\dots,E_n$ be a local orthonormal frame for $g$ on $\mathcal U$. For every $i\neq j$ we define the following two-form $\omega_{ij}\in\Omega^2(\mathcal U)$:
			\begin{equation*}
				\omega_{ij}(R,V):=\det
				\begin{pmatrix}
					\langle R,E_i\rangle_g & \langle V,E_i\rangle_g \\
					\langle R,E_j\rangle_g & \langle V,E_j\rangle_g
				\end{pmatrix}
			\end{equation*}
			and the following open subset of $TM\oplus TM$
			\begin{equation}
				\mathcal U_{ij}:=\left\{(x,V,R)\,:\, \omega_{ij}(R,V)
				\neq 0\right\}.
			\end{equation}
			By construction, the sets $\{V_{ij}:=\mathcal U_{ij}\cap\mathcal R(M,g)\}_{i\neq j}$, constitute an open cover for $\mathcal R(M,g)\cap T\mathcal U\oplus T\mathcal U$. Moreover the vectors \begin{equation*} \{R,V,E_3(x),\dots,\hat E_{i}(x),\dots,\hat E_{j}(x),\dots,E_n(x)\},
	        \end{equation*} are linearly independent for every $(x,V,R)\in V_{ij}$. We define 
			\begin{equation*}
				\{e_3(x,V,R),\dots,e_n(x,V,R)\}
			\end{equation*}
			to be the norm-$|R|$ basis of $\{R,V\}^\perp$ obtained by applying the Gram-Schmidt algorithm to the vectors 
	        \begin{equation*} \{R,V,E_3(x),\dots,\hat E_{i}(x),\dots,\hat E_{j}(x),\dots,E_n(x)\},
	        \end{equation*}
	        and, on $V_{ij}$, we set
	        \begin{equation*}
	      f_k:=e_k\partial_R=\sum_{\mu=1}^n e_k^\mu \partial_{R^\mu},\,\,\,k=3,\dots,n.
	        \end{equation*}
	         Defined in this way, the local fields $e_k$ satisfy two useful properties :
			\begin{equation}
				e_k(x,\lambda R,\lambda V)=\lambda e_k(x,V,R),
			\end{equation}
			for every $\lambda>0$ and 
			\begin{equation}
				e_k(x,\cos \theta R-\sin \theta V, \sin \theta R+\cos \theta V)=e_k(x,V,R),
			\end{equation}
			for every $\theta\in[0,2\pi]$.
			Let $(x^\mu)$ be local coordinates on $M$ and let $(x^\mu,R^\mu,V^\mu)$ be the local coordinates induced by $(x^\mu)$ on $TM\oplus TM$. Let $\Gamma^{\mu}_{\alpha\beta}$ be the corresponding Christoffel symbols of the metric $g$. 
			Given $X,Y\in\mathbb R^n$, we denote with $\Gamma(X,Y)$ the row vector
			\begin{equation}\label{nota_symbols}
		     \Gamma(X,Y)=\sum_{\alpha,\beta=1}^n(\Gamma_{\alpha\beta}^1X^\alpha Y^\beta,\dots, \Gamma_{\alpha\beta}^nX^\alpha Y^\beta),
		\end{equation}
		then, making use of the notation \eqref{eq_nota}, the vector fields $f_1,f_2$ read
			\begin{equation}\label{eqf1f2}
				\begin{aligned}
					f_1&=V\partial_x-\left(V+\Gamma(V,R)\right)\partial_R+\left(R-\Gamma(V,V)\right)\partial_V,\\
					f_2&=R\partial_R+V\partial_V,
				\end{aligned}
			\end{equation}
			and their commutator reads 
			\begin{equation}\label{f12}
				\begin{aligned}
					[f_2,f_1]=&[R\partial_R+V\partial_V,V\partial_x-\left(V+\Gamma(V,R)\right)\partial_R+\left(R-\Gamma(V,V)\right)\partial_V]\\
					=&-\Gamma(V,R)\partial_R+R\partial_V+V\partial_x-\left(V+\Gamma(V,R)\right)\partial_R -2\Gamma(V,V)\partial_V\\
					&+\left(V+\Gamma(V,R)\right)\partial_R-\left(R-\Gamma(V,V)\right)\partial_V\\
					=&\,V\partial_x-\Gamma(V,R)\partial_R-\Gamma(V,V)\partial_V.
				\end{aligned}
			\end{equation}
			We define the vector field $X_{12}$ as
			\begin{equation}\label{X12}
				X_{12}:=f_1-f_{21}=-V\partial_R+R\partial_V.
			\end{equation}
			We define 
			\begin{equation}\label{f121=X121}
				f_{121}:=[f_{12},f_1]=[X_{12},f_1],
			\end{equation}
			and we notice that this vector field satisfies 
			\begin{equation}\label{piM121}
				\pi^{M}_\star f_{121}=\pi^{M}_\star[X_{12},f_1]=[R\partial_V,V\partial_x]=R\partial_x.
			\end{equation}
			Finally, to span the whole tangent space, we need the following vector fields from the third layer
			\begin{equation*}
				f_{1k1}=[[f_1,f_k],f_1],\,\,\, k=3,\dots,n.
			\end{equation*}
			We start by calculating the field $f_{1k}:=[f_1,f_k]$. Let $e_k(t)=e_k(x(t),R(t),V(t))$, where $(x(t),R(t),V(t))$ is an integral curve of $f_1$. Then, according to \eqref{eq_frame}, we have 
			\begin{equation*}
				\begin{aligned}
					&\langle D_te_k,R\rangle =-\langle e_k,D_tR\rangle 
				=\langle e_k,V\rangle =0,\\
					&\langle D_te_k,V\rangle =- \langle e_k,D_tV\rangle =-\langle e_k,R\rangle=0,\\
					&F_{k}^i:=\frac{1}{|R|^2}\langle D_te_k,e_i\rangle.
				\end{aligned}
			\end{equation*}
			Therefore 
			\begin{equation}
				f_1(e_k)=\sum_{i=3}^nF_{k}^ie_i-\Gamma(V,e_k),  
			\end{equation}
			and hence we have 
			\begin{equation}\label{f1k}
				\begin{aligned}
					f_{1k}=[f_1,f_k]&=[V\partial_x-\left(V+\Gamma(V,R)\right)\partial_R+\left(R-\Gamma(V,V)\right)\partial_V,e_k\partial_R]\\
					&=f_1(e_k)\partial_R+\Gamma(V,e_k)\partial_R-e_k\partial_V\\
					&=\left(\sum_{i=3}^nF_{k}^ie_i-\Gamma(V,e_k)\right)\partial_R+\Gamma(V,e_k)\partial_R-e_k\partial_V\\
					&=\sum_{i=3}^nF_{k}^ie_i\partial_R-e_k\partial_V=\sum_{i=3}^nF_{k}^if_i-e_k\partial_V.
				\end{aligned}
			\end{equation}
			Observe that the field $f_{1k1}$ satisfies 
			\begin{equation}\label{f1k1}
				\pi_{\star}^Mf_{1k1}=\pi_{\star}^M[f_{1k},f_1]
				=\pi_{\star}^M\left[\sum_{i=3}^nF_{k}^ie_i\partial_R-e_k\partial_V,V\partial_x\right]=-e_k\partial_x.
			\end{equation}
			Now we claim that the following $3n-2$ vector fields
			\begin{equation*}
				f_1,\dots,f_n,f_{12},f_{13},\dots,f_{1n},f_{121},f_{131},\dots,f_{1n1},
			\end{equation*}
			are linearly independent. Notice that, since $X_{12}=f_1+f_{12}$, we may equivalently claim that 
			\begin{equation}\label{linearind}
				f_1,\dots,f_n,X_{12},f_{13},\dots,f_{1n},f_{121},f_{131},\dots,f_{1n1},
			\end{equation}
			are linearly independent.
			Consider a vanishing linear combination, with coefficients in $\mathcal C^\infty(\mathcal R(M,g))$, of \eqref{linearind}:
			\begin{equation}\label{eq_sumzero}
				\sum_{k=1}^na_kf_k+a_{12}X_{12}+\sum_{k=3}^na_{1k}f_{1k}+a_{121}f_{121}+\sum_{k=3}^n a_{1k1}f_{1k1}=0.
			\end{equation}
			If we apply $\pi^{M}_{\star}$ to both sides of \eqref{eq_sumzero}, since according to \eqref{eq_frame}, \eqref{X12}, \eqref{f1k}, the fields $f_2,\dots,f_n,X_{12},f_{13},\dots,f_{1n}$ have $x$-component equal to zero, we are left with
			\begin{equation*}
				\pi_\star^M\left(a_1f_1+a_{121}f_{121}+\sum_{k=3}^n a_{1k1}f_{1k1}\right)=0,
			\end{equation*}
			which thanks to \eqref{eqf1f2}, \eqref{piM121} and \eqref{f1k1} translates to 
			\begin{equation}
				a_1V+a_{121}R-\sum_{k=3}^n a_{1k1}e_k=0,
			\end{equation}
			therefore $a_1=a_{121}=a_{131}=\dots=a_{1n1}=0$ and equation \eqref{eq_sumzero} becomes
			\begin{equation}\label{eq_sumzero1}
				\sum_{k=2}^na_kf_k+a_{12}X_{12}+\sum_{k=3}^na_{1k}f_{1k}=0.
			\end{equation}
			If we compute the V-component of both sides of \eqref{eq_sumzero1}, according to \eqref{eqf1f2},\eqref{X12},\eqref{f1k} we get \begin{equation*}
				a_2V+a_{12}R-\sum_{k=3}^na_{1k}e_k=0,
			\end{equation*}
			thus $a_2=a_{12}=a_{123}=\dots=a_{12n}=0$, and equation \eqref{eq_sumzero1} reads
			\begin{equation}\label{eq_sumzero2}
				\sum_{k=3}^na_kf_k=0,
			\end{equation}
			but $f_3,\dots,f_n$ are linearly independent by definition, therefore also $a_3=\dots=a_n=0$.
			We have just proved that $\mathcal D^3(M,g)=T\mathcal R(M,g)$. It remains to show that $\text{rank}\,\mathcal D^2(M,g)< \text{rank}\,T\mathcal R(M,g)=3n-2$. Notice that 
			\begin{equation*}
				\mathcal D^2(M,g)=\mathcal D(M,g)+\langle \{[f_1,f_j]\}_{j=2}^n \rangle +\langle \{[f_i,f_j]\}_{i,j=2}^n \rangle.
			\end{equation*}
			We claim that the distribution $\langle\{ f_2,\dots,f_n\}\rangle$ is integrable. If that is the case then $\langle\{[f_i,f_j]\}_{i,j=2}^n \rangle$ $\subset$ $\mathcal D(M,g)$ and as a consequence 
			\begin{equation}\label{D^2}
				\mathcal D^2(M,g)=\mathcal D(M,g)+\langle \{[f_1,f_j]\}_{j=2}^n \rangle ,
			\end{equation}
			therefore, since $f_1,\dots,f_n,f_{12},\dots,f_{1n}$ are linearly independent, we deduce
			\begin{equation}
				\text{rank}\,\mathcal D^2(M,g)=\text{rank}\, \mathcal D(M,g)+\text{rank} \mathcal \langle \{[f_1,f_j]\}_{j=2}^n \rangle=2n-1<3n-2.
			\end{equation}
			To prove our claim observe that, for $j=2,\dots,n$, $[f_2,f_j]=0$, indeed $e^{tf_2}(x,V,R)=(x,e^tR,e^tV)$, hence by construction $e_j(e^{tf_2}(x,V,R))=e^te_j(x,V,R)$, or in other words $f_2(e_j)=e_j$, but then
			\begin{equation*}
				[f_2,f_j]=f_2(e_j)\partial_R-e_j\partial_R=0.
			\end{equation*}
			Now consider $f_j,f_i$ with $i,j\geq3$, to lighten the notation in the following we have denoted $e_{ij}(t)=e_j(e^{tf_i}(x_0,V_0,R_0))$, $V(t)=V(e^{tf_i}(x_0,V_0,R_0))$ and $R(t)=R(e^{tf_i}(x_0,V_0,R_0))$. Let us compute 
			\begin{equation*}
				\begin{aligned}
					&\langle \dot e_{ij}, R\rangle=-\langle e_{ij},\dot R\rangle =-\langle e_{ij},e_{ii}\rangle=0,\\
					&\langle \dot e_{ij}, V\rangle =-\langle e_{ij},\dot V\rangle =0,
				\end{aligned}
			\end{equation*}
			and let us denote $a_{ijk}=\langle \dot e_{ij}, e_k\rangle/|R|^2$, then we have \begin{equation*}
				f_j(e_i)=\dot e_{ij}=\sum_{k=3}^na_{ijk}e_k,
			\end{equation*}
			meaning that 
			\begin{equation*}
				[f_i,f_j]=(f_i(e_j)-f_j(e_i))\partial_R
				=\sum_{k=3}^n(a_{jik}-a_{ijk})e_k\partial_R=\sum_{k=3}^n(a_{jik}-a_{ijk})f_k,
			\end{equation*}
			and this concludes the proof of the claim.
			Equation \eqref{D^2}, together with the fact that the fields $\{f_j\}_{j=1}^n$,$\{f_{1k}\}_{k=2}^n$, $\{f_{1k1}\}_{k=3}^n$ are a basis of $T\mathcal R(M,g)$, gives us the following local characterization of $\mathcal D(M,g)$'s flag:
			\begin{equation}
				\begin{aligned}
					&\mathcal D(M,g)=\langle f_1,\dots,f_n\rangle,\\
					&\mathcal \mathcal \mathcal D^2(M,g)=\mathcal D(M,g)\oplus \langle\{f_{1k}\}_{k=2}^n\rangle,\\
					&\mathcal D^3(M,g)=\mathcal D^2(M,g)\oplus \langle\{f_{1k1}\}_{k=2}^n\rangle=T\mathcal R(M,g).
				\end{aligned}
			\end{equation}
			In particular we have 
			\begin{equation}
				\text{Growth}\,\mathcal D(M,g)=(n,2n-1,3n-2).
			\end{equation}
		\end{proof}
We obtain the following corollary.
  \begin{corollary}
  The following inclusions hold:
\begin{equation}\label{nil_field}
			    \begin{aligned}
			        &f_{jk}\in\mathcal D(M,g),\,\,\,j,k=2\dots,n, \\
			        &f_{j1k}\in\mathcal D^2(M,g),\,\,\,j,k=2,\dots,n.
			    \end{aligned}
			\end{equation}
  \end{corollary}
  \begin{proof}
      We have already proved the first inclusion of \eqref{nil_field} since we have shown that $\langle\{f_2,\dots,f_n\}\rangle$ is integrable. To prove the second inclusion observe that the kernel of $\pi^M_\star$ has dimension $(3n-2)-n=2n-2$. On the other hand equations \eqref{eq_frame},  \eqref{X12}, \eqref{f1k}, imply that the following $2n-2$ linearly independent local smooth sections of $\mathcal D^2(M,g)$
			\begin{equation}
			f_2,f_3,\dots,f_n,X_{12},f_{13},\dots,f_{1n},
			\end{equation}
			are contained in $\ker\pi_\star^M$, and hence they constitute a basis for it:
			\begin{equation*}
				\ker\pi_\star^M=\langle\left\{ f_2,f_3,\dots,f_n,X_{12},f_{13},\dots,f_{1n}\right\}\rangle.
			\end{equation*}
		The distribution $\ker\pi_\star^M$ is integrable: if $X,Y$ are sections of $\ker\pi_\star^M$, then they are $\pi^M$-related to the zero-section of $TM$, and therefore also their bracket is so. Thanks to the integrability of $\ker\pi_\star^M$, recalling that $X_{12}=f_1+f_{12}$, we deduce that $f_{j1k}\in\mathcal D^2(M,g),\,\,\,j,k=2,\dots,n$.
  \end{proof}
	If $(M,g)$ is a Riemannian surface, which for simplicity we assume to be orientable, then the fields $f_1,f_2$ defined in \eqref{eq_frame} can be related with the fields $f_1^g$, $f_2^g$ defined in \eqref{2dim_frame}. Indeed, in this case $\mathcal R(M,g)$ has two connected components:
	\begin{equation*}
	    \mathcal R(M,g)=\{(x,R,R^\perp)\,:\,(x,R)\in TM\setminus s_o\}\cupdot \{(x,R,-R^\perp)\,|\,(x,R)\in TM\setminus s_o\}.
	\end{equation*}
	The projection $p:\mathcal R(M,g)\to TM\setminus s_o$ restricted to the first of such components is a diffeomorphism satisfying  \begin{equation*}
		p_\star f_1=f^g_1,\,\,\,p_\star f_2=f^g_2.
	\end{equation*}
	This fact, combined with Theorem \ref{thm_D}, constitutes a proof of Propositions \ref{prop_2dim}.\newline
	Focusing back on the general case, we would like to define a metric on $\mathcal D(M,g)$ that preserves the symmetries of the curvature radii lift 
	\begin{equation}\label{curv_rad_lift}
		\begin{aligned}
			R_g: \{\gamma\in \mathcal C^2([0,1],M)\,:\,\gamma\,\, \text{regular},\,\kappa_g(\gamma)\neq 0\}&\to\mathcal C^{0}\left([0,1],TM
			\right)
			\\\gamma&\mapsto R_g(\gamma),
		\end{aligned}
	\end{equation}
	hence before defining such a metric, we need to have a better understanding of how much information is encoded in the mapping \eqref{curv_rad_lift}.
	\begin{defi}\label{same_R}
		Let $(M,g)$ and $(N,\eta)$ be Riemannian manifolds, we say that they have the same curvature radii if and only if there exists a diffeomorphism $\varphi:(M,g)\to (N,\eta)$ such that 
		\begin{equation}\label{sym_R}
			R_\eta\circ\varphi=\varphi_\star\circ R_g.
		\end{equation}
	\end{defi}
	\begin{remark}\label{rmk_curv}
	Equality \eqref{sym_R} is meant as an equality of maps, therefore the diffeomorphism $\varphi$ of Definition \ref{same_R} maps the domain of $R_g$ to the one of $R_\eta$. This implies that $\gamma:[0,1]\to M$ is the reparametrization of a geodesic of $(M,g)$ if and only if $\varphi\circ\gamma$ is the reparametrization of a geodesic of $(N,\eta)$. In particular 
	\begin{equation}\label{zero_curv}
		\kappa_g(\gamma)\equiv 0\iff \kappa_\eta(\varphi\circ\gamma)\equiv 0.
	\end{equation}
	\end{remark}
	Definition \ref{same_R} gives a precise meaning to the statement of Theorem \ref{thm_R_info}, which we now prove.
		\begin{proof}(Theorem \ref{thm_R_info})
			It is sufficient to prove the theorem for two Riemannian metrics $\eta,g$ on the same manifold $M$.
			Assume firs that $(M,g)$ and $(M,\eta)$ are homothetic manifolds, then $\eta=\lambda g$ for some $\lambda>0$. The two metrics have the same Levi-Civita connection $\nabla$, moreover for every $X,Y\in TM$,
			$X\perp_{g}Y$ if and only if $X\perp_{\eta}Y$, therefore we simply denote $\perp=\perp_{g}=\perp_{\eta}$.
			Let $\gamma:[0,T]\to M$ be a regular curve, then the curvature computed with respect to $\eta$ can be easily related to the curvature computed with respect to $g$:
			\begin{equation*}
				\kappa_\eta(\gamma)=\frac{|\pi_{\dot\gamma^\perp}\left(D_t\dot\gamma\right)|_{\lambda\,g}}{|\dot\gamma(t)|_{\lambda\,g}^2}=\frac{1}{\sqrt{\lambda}}\frac{|\pi_{\dot\gamma^\perp}\left(D_t\dot\gamma\right)|_{g}}{|\dot\gamma(t)|_{g}^2}=\frac{1}{\sqrt{\lambda}}\kappa_g(\gamma).
			\end{equation*}
			Therefore $k_\eta(\gamma)$ is never vanishing if and only if also $k_g(\gamma)$ is so, and in that case using equation \eqref{eq_rad_curv} we deduce $R_\eta(\gamma)=R_g(\gamma)$.\newline
			Conversely, assume that $(M,g)$ and $(M,\eta)$ have the same curvature radii, i.e. \eqref{sym_R} holds and 
			\begin{equation}\label{curvM}
				R_g(\gamma)=R_\eta(\gamma)
			\end{equation}
			for any regular curve $\gamma$ with never vanishing geodesic curvature.
			Observe that for any $X,Y\in T_xM$, $X\perp_g Y$ if and only if there exists $\gamma:[0,1]\to M$ such that $\gamma(0)=x$, $\dot\gamma(0)=X$ and $R_g(\gamma)(0)=Y$, but then, since $R_g=R_\eta$, we obtain $\perp_g=\perp_\eta$.
			According to Remark \ref{rmk_curv} we have
			\begin{equation*}
			D^g_t\dot\gamma\propto\dot\gamma\,\,\iff\,\, D^\eta_t\dot\gamma\propto\dot\gamma.
			\end{equation*}
			Hence for any curve $\gamma:[0,1]\to M$ the following proportionality relationship holds \begin{equation}\label{propto}
			D_t^g\dot\gamma-D_t^\eta\dot\gamma\propto\dot\gamma,
			\end{equation}
			and in particular 
			\begin{equation}\label{perp1}
			\pi_{\dot\gamma^\perp}D_t^g\dot\gamma=\pi_{\dot\gamma^\perp}D_t^\eta\dot\gamma.
			\end{equation}
			In light of \eqref{perp1}, condition \eqref{curvM} reduces to
			\begin{equation*}
				\kappa_g(\gamma)|\pi_{\dot\gamma^\perp}D_t^g\dot\gamma|_g=\kappa_\eta(\gamma)|\pi_{\dot\gamma^\perp}D_t^\eta\dot\gamma|_\eta,
			\end{equation*}
			which, by definition of geodesic curvature, in turn is equivalent to
			\begin{equation}\label{perp2}
				\frac{|\pi_{\dot\gamma^\perp}D_t^g\dot\gamma|_g}{|\dot\gamma|_g}=\frac{|\pi_{\dot\gamma^\perp}D_t^\eta\dot\gamma|_\eta}{|\dot\gamma|_\eta}.
			\end{equation}
			For any $X,Y\in T_xM$, $X$ is perpendicular to $Y$ if and only if there exists $\gamma:[0,1]\to M$ satisfying 
			\begin{equation*}
				\dot\gamma(0)=X,\,\,\,\pi_{\dot\gamma^\perp}D_t^g\dot\gamma(0)=Y.
			\end{equation*}
			Hence equation \eqref{perp2} implies that 
			\begin{equation}
			\frac{|Y|_g}{|X|_g}=\frac{|Y|_\eta}{|X|_\eta},\,\,\,\forall\,X\perp Y,
			\end{equation}
			which in turn implies that the two metrics are conformally related: there exists a smooth function $f:M\to\mathbb R$ such that 
			\begin{equation}
				\eta=e^{2f}g.
			\end{equation}
			Since the metrics are conformal, their Levi-Civita connection difference tensor can be easily computed
			\begin{equation*}
			T(X,Y):=\nabla^\eta_XY-\nabla^g_XY=X(f)Y+Y(f)X-\langle X,Y\rangle_g\,\text{grad}_g f.
			\end{equation*}
			On the other hand thanks to equation \ref{propto} we know that \begin{equation*}
				T(X,X)\propto X,
			\end{equation*}
			and hence for any $Y\perp X$ we have 
			\begin{equation*}
				0=\langle T(X,X),Y\rangle_g=-| X|^2_g\langle\text{grad}_g f,Y\rangle_g=-|X|^2_gdf(Y).
			\end{equation*}
			Since $X,Y$ are arbitrary orthogonal vectors, this implies that $f$ is constant.
		\end{proof}
	The curvature radius lift $R_g$ of a Riemannian manifold $(M,g)$ is a complete homothety invariant of the metric $g$. 
	It is then natural to define on $\mathcal D(M,g)$ a metric which is invariant under homothetic transformations. For every $a,b\in\mathbb R$, $b\neq 0$, we define a metric $\eta_{a,b}$ on $\mathcal D(M,g)$ by declaring the fields 
	\begin{equation}\label{flat_frame_a,b}
		\begin{aligned}
			&f_1^{a,b}:=\frac{1}{\sqrt{a^2+b^2}}f_1,\\
			&f_j^{a,b}:=\frac{1}{b}f_j,\,\,j=2,\dots,n,\\
		\end{aligned}
	\end{equation}
	a local orthonormal frame. 
	For any $(\gamma, R, V)$ in the image of the lift \eqref{general_lift} the metric $\eta_{a,b}$ satisfies the following equation
	\begin{equation}\label{eq_metricab}
		\begin{aligned}
			\left|\frac{d}{dt}(\gamma,V,R)\right|^2_{\eta_{a,b}}=a^2\frac{|\dot\gamma|^2}{|R|^2}+b^2\frac{|D_tR|^2}{|R|^2}.
		\end{aligned}
	\end{equation}
	To prove \eqref{eq_metricab} recall that according to Theorem \ref{thm_D} there exists $u_1,\dots,u_n:[0,1]\to\mathbb R$ such that
	\begin{equation}
		\frac{d}{dt}(\gamma,V,R)=u_1f^{a,b}_1+u_2f^{a,b}_2+\dots u_nf^{a,b}_n,
	\end{equation}
	or equivalently according to \eqref{eq_fund_syst}, 
	\begin{equation}
		\begin{aligned}
			\dot\gamma&=\frac{u_1}{\sqrt{a^2+b^2}}V,\\
			D_t R&=-\frac{u_1}{\sqrt{a^2+b^2}}V+\frac{u_2}{b}R+\frac{u_3}{b}e_3+\dots+\frac{u_n}{b}e_n,\\
			D_t V&=\frac{u_1}{\sqrt{a^2+b^2}}R+\frac{u_2}{b}V.
		\end{aligned}
	\end{equation}
	Hence
	\begin{equation}\label{metric_computation}
		\begin{aligned}
			a^2\frac{|\dot\gamma|^2}{|R|^2}+b^2\frac{|D_tR|^2}{|R|^2}=&a^2\frac{u_1^2}{|R|^2(a^2+b^2)}|R|^2+b^2\frac{u_1^2}{|R|^2(a^2+b^2)}|R|^2\\
			&+\frac{b^2}{|R|^2}\left(\frac{|R|^2}{b^2}u_2^2+\dots \frac{|R|^2}{b^2}u_n^2\right)\\
			=& u_1^2+u_2^2+\dots+u_n^2=\left|\frac{d}{dt}(\gamma,V,R)\right|^2_{\eta_{a,b}}.
		\end{aligned}
	\end{equation}
	The sub-Riemannian manifold $(\mathcal R(M,g),\mathcal D(M,g), \eta_{a,b})$ is called the \textit{sub-Riemannian manifold of curvature radii} of $(M,g)$ and it is denoted with $\mathcal R_{a,b} (M,g)$.
	We now prove the main result regarding these metrics, Theorem \ref{thm_sym}.
		\begin{proof}(Theorem \ref{thm_sym})
			We begin by making sure that the map \eqref{eq_group} is well defined. Let $\varphi:(M,g)\to (M,g)$ be a homothety of Riemannian manifolds. By construction $\varphi_\star\oplus\varphi_\star:TM\oplus TM\to TM\oplus TM$ maps $\mathcal R(M,g)$ to $\mathcal R(M,g)$, this is a simple consequence of the fact that $\varphi_\star$ preserves orthogonality and the ratios between norms of vectors. To prove that $\Phi:=\varphi_\star\oplus\varphi_\star$ is a sub-Riemannian isometry we show that $\Phi_\star f_i=f_i$, $i=1,2$, and that the fields $\{\Phi_\star f_3,\dots, \Phi_\star f_n\}$ are related to  $\{f_3,\dots,f_n\}$ by an orthogonal transformation.
			Let $(\gamma,V,R):[0,T]\to \mathcal R(M,g)$ be an integral curve of $f_1$ then we have 
			\begin{equation*}
				\begin{cases}
					\dot\gamma=V,\\
					D_t R=-V,\\
					D_t V=R,
				\end{cases}    
			\end{equation*}
			while the image under $\Phi$ of such curve, $\Phi(\gamma, R, V)=(\varphi\circ\gamma,\varphi_\star R, \varphi_\star V)=:(\bar\gamma,\bar R ,\bar V)$, satisfies 
			\begin{equation*}
				\begin{aligned}
		  &\dot{\bar\gamma}=\frac{d}{dt}\varphi(\gamma)=\varphi_\star\dot\gamma=\varphi_\star V=\bar V,\\
					&D_t\bar R=\nabla_{\bar V}\bar R=\varphi_\star\nabla_V R=-\varphi_\star V=-\bar V,\\
					&D_t\bar V=\nabla_{\bar V}\bar V=\varphi_\star\nabla_V V=\varphi_\star R=\bar R.
				\end{aligned}
			\end{equation*}
			Summarazing we have 
			\begin{equation*}
				\begin{cases}
					\dot{\bar\gamma}=\bar V,\\
					D_t\bar R=-\bar V,\\
					D_t \bar V=\bar R,
				\end{cases}
			\end{equation*}
			hence $\Phi_\star f_1=f_1$. The case of $f_2$ is analogous. Since $\varphi$ is a homothety it sends local orthogonal basis of $TM$ to local orthogonal basis, and if $X,Y\in T_xM$ have the same norm, then so do $\varphi_\star X$, $\varphi_\star Y$. Given $(x,V,R)\in\mathcal R(M,g)$, let $\{e_3(x,V,R),\dots,e_n(x,V,R)\}$ be the local orthogonal basis of $\{R,V\}^\perp$ constructed in the proof of Theorem \ref{thm_D}, then both $\{e_j(\varphi(x),\varphi_\star V,\varphi_\star R) \}_{j=3}^n$ and $\{\varphi_\star e_j(x,V,R)\}_{j=3}^n$ are othogonal basis of $\{\varphi_\star R,\varphi_\star V\}^{\perp}$, therefore they are related by an orthogonal transformation: there exists $O\in O(n-2)$ such that  
			\begin{equation}
			    \varphi_\star e_j(x,V,R)=O^i_je_i(\varphi(x),\varphi_\star V,\varphi_\star R).
			\end{equation}
			Now let $(\gamma,V,R):[0,1]\to\mathcal R(M,g)$ be an integral curve of $f_j$, for $j\geq 3$, we have 
			\begin{equation*}
			\begin{aligned}
		   &\frac{d}{dt}\varphi\circ\gamma=\varphi_\star\dot\gamma=0,\\
			    &\frac{d}{dt}\varphi_\star R=\varphi_\star \frac{d}{dt}R=\varphi_\star e_j(x,V,R)=O^i_je_i(\varphi(x),\varphi_\star V,\varphi_\star R),\\
			    &\frac{d}{dt}\varphi_\star V=\varphi_\star \frac{d}{dt}V=0.
			\end{aligned}
			\end{equation*}
			Hence the fields $\{\Phi_\star f_3,\dots, \Phi_\star f_n\}$ are related to  $\{f_3,\dots,f_n\}$ by an orthogonal transformation.\newline
			The map \eqref{eq_group} is an injective homomorphism by definition. We need to prove that it is also surjective. Let $\Phi:\mathcal R_{a,b}(M,g)\to\mathcal R_{a,b}(M,g)$, given by 
			\begin{equation}\label{PhiNote}
				\begin{aligned}
					(x,V,R)\mapsto\Phi(x,V,R)=(\Phi^x(x,V,R),\Phi^R(x,V,R),\Phi^V(x,V,R)),
				\end{aligned}
			\end{equation}
			be an isometry of sub-Riemannian manifolds, we have to show that there exists $\varphi:(M,g)\to (N,\eta)$ homothety of Riemannian manifolds such that
			\begin{equation*}
				\Phi=(\varphi_\star\oplus\varphi_\star)|_{\mathcal R(M,g)}.
			\end{equation*}
			Assume first $n>2$ and 
			consider the projection to $M$
			\begin{equation*}
				\begin{aligned}
					\pi^M:\mathcal R(M,g)&\to M\\
					(x,V,R)&\mapsto x.
				\end{aligned}
			\end{equation*}
			Notice that the kernel of $\pi^M_\star$ has dimension $(3n-2)-n=2n-2$. On the other hand, as shown in the proof of Theorem \ref{thm_D}, the equations \eqref{eq_frame}, \eqref{X12}, \eqref{f1k}, imply that the following $2n-2$ linearly independent smooth local sections of $\mathcal D^2(M,g)$
	        \begin{equation}\label{fieldKer}
				f_2,f_3,\dots,f_n,X_{12},f_{13},\dots,f_{1n},
			\end{equation}
			are contained in $\ker\pi_\star^M$, and hence they constitute a basis for it:
			\begin{equation*}
				\ker\pi_\star^M=\langle\left\{ f_2,f_3,\dots,f_n,X_{12},f_{13},\dots,f_{1n}\right\}\rangle.
			\end{equation*}
			We claim that $\ker\pi^M_\star\subset \mathcal D^2(M,g)$ is the unique integrable sub-bundle of $\mathcal D^2$ having maximal rank $2n-2$.
			We have already noticed that $\ker\pi_\star^M$ is integrable in the proof of Theorem \ref{thm_D}.
			Let $\mathcal D'\subset\mathcal D^2(M,g)$ be an integrable distribution and let $X,Y$ be smooth sections of $\mathcal D'$. Then there exists some smooth functions $\alpha_1,\dots,\alpha_{2n-1}$, $\beta_1,\dots,\beta_{2n-1}$ such that 
			\begin{equation*}
				\begin{aligned}
					X&=\alpha_1f_1+\alpha_2f_2+\dots+\alpha_{n+1}X_{12}+\alpha_{n+2}f_{13}+\dots+\alpha_{2n-1}f_{1n},\\
					Y&=\beta_1f_1+\beta_2f_2+\dots+\beta_{n+1}X_{12}+\beta_{n+2}f_{13}+\dots+\beta_{2n-1}f_{1n}.
				\end{aligned}
			\end{equation*}
			On the other hand, since $\mathcal D'\subset \mathcal D^2(M,g)$ is integrable, it holds
			\begin{equation}
				[X,Y]\equiv 0\mod{\mathcal D^2(M,g)},
			\end{equation}
			which translates to 
			\begin{equation*}
				\begin{aligned}
					(\alpha_1\beta_{n+1}-\beta_1\alpha_{n+1})[f_1,X_{12}]+\sum_{k=3}^n(\alpha_1\beta_{n+k-1}-\beta_1\alpha_{n+k-1})[f_1,f_{1k}]\equiv 0
				\end{aligned}
			\end{equation*}
			$\text{mod}\,\mathcal D^2(M,g)$. Since $[f_1,X_{12}]$, $[f_1,f_{1k}]$, $k=3,\dots,n$, are linearly independent mod$\,\mathcal D^2(M,g)$, we have
			\begin{equation}\label{coeff}
			\alpha_1\beta_{n+k-1}-\beta_1\alpha_{n+k-1}=0,\,\, k=2,\dots,n.
			\end{equation}
			If $\alpha_1\equiv\beta_1\equiv0$ then $X,Y\in\ker \pi^M_\star$. Otherwise if, for instance, $\alpha_1\neq0$, then $\beta_{n+k-1}=\frac{\beta_1}{\alpha_1}\alpha_{n+k-1}$ for $k=2,\dots,n$ and setting  
			\begin{equation*}
				Z=\alpha_{n+1}X_{12}+\alpha_{n+2}f_{13}+\dots+\alpha_{2n-1}f_{1n},
			\end{equation*}
			we deduce that 
			\begin{equation}
				X=\sum_{i=1}^n\alpha_if_i+Z,\,\,Y=\sum_{i=1}^n\beta_if_i+\frac{\beta_1}{\alpha_1}Z.
			\end{equation}
			As a consequence 
			\begin{equation*}
				\mathcal D'\subset\langle\{f_1,f_2,\dots,f_n,Z\}\rangle,
			\end{equation*}
			therefore $\rank\mathcal D'\leq n+1$. Observe that it is not possible that $\text{rank}\,\mathcal D'=n+1$, because otherwise $\mathcal D'$ would contain $\mathcal D(M,g)$, contradicting the hypothesis of integrability. We deduce that 
			\begin{equation*}
				\rank\mathcal D'\leq n<2n-2,\,\,\,\forall\,\,n>2,
			\end{equation*}
			proving our claim.
			Since $\Phi$ is an isometry, it preserves every layer of $\mathcal D(M,g)$'s flag, in the sense that 
			\begin{equation*}
				\Phi_\star \mathcal D^i(M,g)=\mathcal D^i(M,g),\,\, i=1,2,3,
			\end{equation*}
			therefore since $\ker\pi^M_\star\subset \mathcal D^2(M,g)$ is the unique integrable sub-bundle of $\mathcal D^2(M,g)$ having maximal rank $2n-2$, we deduce that 
			\begin{equation}
				\Phi_\star\ker\pi^M_\star=\ker\pi^M_\star,
			\end{equation}
			or equivalently 
			\begin{equation}\label{eqdefker}
				\ker(\pi^M\circ\Phi)_\star=\ker\pi^M_\star.
			\end{equation}
			Both $\ker\pi^M_\star$ and $\mathcal D(M,g)$ are invariant under $\Phi_\star$, therefore also their intersection is so, hence
			\begin{equation*}
				\Phi_\star\langle\{f_2,\dots,f_n\}\rangle=\Phi_\star(\ker\pi^M_\star\cap\mathcal D(M,g))=\ker\pi^M_\star\cap\mathcal D(M,g)=\langle\{f_2,\dots,f_n\}\rangle,
			\end{equation*}
			but then, since $\Phi_\star$ is a sub-Riemannian isometry preserving $\langle\{f_2,\dots,f_n\}\rangle$, it must preserve also its orthonormal complement, namely $\Phi_\star f_1=\pm f_1$.
			To show that $\Phi_\star f_2=\pm f_2$, consider the following linear map
			\begin{equation*}
				\begin{aligned}
					L:\ker \pi^M_{\star}\cap\mathcal D(M,g)&\to TM\\
					X&\mapsto \pi^M_\star[f_1,X],\\
				\end{aligned}
			\end{equation*}
			and notice that, according to \eqref{f12} and \eqref{f1k}, $\ker L=\langle \{f_3,\dots, f_n\}\rangle$.
			On the other hand, according to \ref{eqdefker} 
			\begin{equation*}
				\pi^M_\star[f_1,X]=0\iff \pi^M_\star\circ\Phi_\star[f_1,X]=0,
			\end{equation*}
			which, taking into account the fact that $\Phi_\star f_1=\pm f_1$, translates to
			\begin{equation*}
				\pi^M_\star[f_1,X]=0\iff \pi^N_\star[f_1,\Phi_\star X]=0,
			\end{equation*}
			or more simply to $\ker L=\Phi_\star\ker L$, thus $\Phi_\star\langle\{ f_3,\dots, f_n\}\rangle=\langle\{ f_3,\dots, f_n\}\rangle$. So far we have deduced that $\Phi_\star\langle\{ f_1,f_3,\dots, f_n\}\rangle=\langle\{f_1,f_3,\dots, f_n\}\rangle$, taking the orthonormal complement of this last equation with respect to the sub-Riemannian metric, we deduce that $\Phi_\star f_2=\pm f_2$.
			A direct consequence of equation \eqref{eqdefker} is that 
			\begin{equation}\label{finddiffeo}
				\Phi^x(x,V,R)=\Phi^x(x,R',V'),\,\,\,\forall\,\,(x,V,R),(x,R',V')\in\mathcal R(M,g),
			\end{equation}
			therefore the following map is well defined diffeomorphism
			\begin{equation*}
				\begin{aligned}
					\varphi:M&\to M\\
					x&\mapsto \Phi^x(x,V,R)=\pi^M\circ\Phi(x,V,R).
				\end{aligned}
			\end{equation*}
			Let $(\gamma,V,R)$ be an integral curve of $f_1$, then, since $\Phi_\star f_1= \pm f_1$, it holds
			\begin{equation}\label{eq_Phif1}
				\begin{cases}
					\dot \Phi^x=\pm\Phi^V,\\
					D_t\Phi^R=\mp\Phi^V,\\
					D_t\Phi^V=\pm\Phi^R,
				\end{cases}
			\end{equation}
			but we know that $\Phi^x=\varphi$, hence the first equation of \eqref{eq_Phif1} reads
			\begin{equation*}
				\varphi_\star V=\varphi_\star\dot\gamma= \dot\Phi^x=\pm\Phi^V,
			\end{equation*}
			consequently $\Phi^V=\pm\varphi_\star V$.
			Now consider the vector field $f_{21}=[f_1,f_1]$, recall that, according to \eqref{f12}, its integral curves satisfy 
			\begin{equation}
				\begin{cases}
					\dot x= V,\\
					D_tR=0,\\
					D_tV=0,
				\end{cases}
			\end{equation}
			on the other hand since $\Phi_\star f_1= \pm f_1$, $\Phi_\star f_2=\pm f_2$, it also holds $\Phi_\star f_{12}=\pm f_{12}$ thus
			\begin{equation*}
				\nabla_VV=0\iff \nabla_{\varphi_\star V}\varphi_\star V=0
			\end{equation*}
			or equivalently  
			\begin{equation}\label{eq_pullcone}
				\nabla=\varphi^\star\nabla.
			\end{equation}
			Exploiting \eqref{eq_pullcone} together with the third equation in \eqref{eq_Phif1} we deduce
			\begin{equation*}
				\pm\Phi^R=D_t\Phi^V=\nabla_{\varphi_\star V}\varphi_\star V=\varphi_\star \nabla_VV=\varphi_\star D_tV=\varphi_\star R,
			\end{equation*}
			from which we conclude 
			\begin{equation}\label{Phipm}
				\Phi(x,V,R)=(\varphi(x),\pm\varphi_\star R,\pm\varphi_\star V)=\pm \varphi_\star\oplus\varphi_\star(x,V,R).
			\end{equation}
			Substituting \eqref{Phipm} in the second equation of \eqref{eq_Phif1} we find that we have to discard the minus sign and we are left with
			\begin{equation}
				\Phi(x,V,R)=(\varphi(x),\varphi_\star V,\varphi_\star R).
			\end{equation}
			Let $(\gamma,V,R):[0,1]\to\mathcal R(M,g)$ be a curve tangent to $\mathcal D(M,g)$ with $\gamma$ regular, then according to Theorem \ref{thm_D}, $R=R_g(\gamma)$. On the other hand since $\Phi$ preserves $\mathcal D(M,g)$, also $(\varphi\circ\gamma,\varphi_\star R,\varphi_\star V)$ is tangent to $\mathcal D(M,g)$ and hence, according to Theorem \ref{thm_D} we have 
			\begin{equation*}
				R_g(\varphi\circ\gamma)=\varphi_\star R_g(\gamma).
			\end{equation*}
			On the other hand, since $\varphi:(M,\varphi^\star g)\to (M,g)$ is an isometry, it holds
			\begin{equation*}
			    R_g(\varphi\circ\gamma)=\varphi_\star R_{\varphi^\star g}(\gamma),
			\end{equation*}
			therefore $R_g=R_{\varphi^\star g}$ and, according to Theorem \ref{thm_R_info}, $\varphi :(M,g)\to (M,g)$ is a homothety.\newline
			Assume now that $n=\dim{M}=2$.
			Consider the following linear map
			\begin{equation}\label{Engel_map}
				\begin{aligned}
					P:\mathcal D(M,g)&\to\mathcal D^3(M,g)/\mathcal D^2(M,g)\\
					X&\mapsto \left[X,[f_1,f_2]\right]\mod\mathcal D^2(M,g).
				\end{aligned}
			\end{equation}
			Observe that the kernel of \eqref{Engel_map}, is invariant under pushforwards of sub-Riemannian isometries, in the sense that
			\begin{equation*}
				\Phi_\star\ker P=\ker P,
			\end{equation*}
			therefore
			\begin{equation*}
				\Phi_\star\langle\{f_2\}\rangle=\langle\{f_2\}\rangle,
			\end{equation*}
			and since $\Phi$ is an isometry we deduce that
			\begin{equation}
				\Phi_\star f_1=\pm f_1,\,\,\,\Phi_\star f_2=\pm f_2.
			\end{equation}
			We claim that $\Phi_\star f_2=f_2$. Assume by contradiction that $\Phi_\star f_2=-f_2$ and consider the following couple of vector fields 
			\begin{equation*}
				X=f_1+f_{12},\,\,\,Y=f_2.
			\end{equation*}
			Since $[f_2,f_{12}]=f_{12}$, we have 
			\begin{equation*}
				[X,Y]=[f_1,f_2]-f_{12}=f_{12}-f_{12}=0.
			\end{equation*}
			On the other hand since $\Phi_\star f_1=\pm f_1$, $\Phi_\star f_2=-f_2$ we find 
			\begin{equation*}
				0=\Phi_\star[X,Y]=[\pm f_1\mp f_{12},-f_2]=\mp 2f_{12}\neq 0,
			\end{equation*}
			which is a contradiction. Combining $\Phi_\star f_2=f_2$ and $\Phi_\star f_1=\pm f_1$ we deduce that
			\begin{equation*}
				\Phi_\star X_{12}=\pm X_{12},
			\end{equation*}
			therefore, since 
			\begin{equation*}
				\ker\pi^M_\star =\langle\{f_2,X_{12}\}\rangle,
			\end{equation*}
			we obtain 
			\begin{equation*}
				\ker\pi^M_\star =\ker(\pi^M\circ\Phi)_\star.
			\end{equation*}
			The remaining part of the proof is identical to the one of the case $n>2$. In particular we can conclude by repeating verbatim what is written between equation \eqref{finddiffeo} and the discussion of the $2$-dimensional case. 
		\end{proof}
	\section{The fields $f_1,f_2$}\label{secf1f2}
	Given a Riemannian manifold $(M,g)$, the vector fields $f_1,f_2$ defined in  \eqref{eq_frame} are global sections of $T(TM\oplus TM)$, which restrict to vector fields on $\mathcal R(M,g)$. As a consequence of Theorem \ref{thm_D} we know that they are metric invariants of $(M,g)$.  Actually they are even invariant under homothetic transformation. In the current section we show that some classical metric invariants can be recovered from the iterated Lie brackets of $f_1,f_2$. 
	Indeed, as the next proposition shows, the field $[f_1,f_2]$ already gives us a complete description of the geodesics of $(M,g)$; in this sense the fields $f_1,f_2$ give us a factorization of the geodesic flow.
	\begin{prop}{}\label{prop_Factor}
		Let $(M,g)$ be a Riemannian manifold, let $x\in M$ and let $\exp_x^{(M,g)}$ be the corresponding exponential map at $x$. Let $f_{21}=[f_2,f_1]$ and $f_{121}=[f_1,f_{21}]$, then, for every $(x,V,R)\in TM\oplus TM$, the following formulae hold
		\begin{equation}
			\begin{aligned}
				&\pi^M\circ e^{tf_{21}}(x,V,R)=\exp_x^{(M,g)}(tV),\\
				&\pi^M\circ e^{tf_{121}}(x,V,R)=\exp_x^{(M,g)}(tR),
			\end{aligned}
		\end{equation}
	for every $t\in\mathbb R$ such that the flow of the fields is defined.
	\end{prop}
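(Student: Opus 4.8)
The plan is to reduce both identities to one observation about flows on $TM\oplus TM$: if a field $W$ has integral curves $(x,V,R)(t)$ for which $\dot x$ equals one of the fibre vectors $U\in\{V,R\}$ and $D_tU=0$ along the curve, then $D_t\dot x=D_tU=0$, so the base curve $x(t)$ is the geodesic with $\dot x(0)=U(0)$ and $\pi^M\circ e^{tW}(x,V,R)=\exp_x^{(M,g)}(tU)$. Everything therefore comes down to producing the covariant form of the two bracket fields $f_{21}$ and $f_{121}$ and reading off the ODEs satisfied by their integral curves; note that, since \eqref{eqf1f2}, \eqref{f12} and \eqref{X12} are coordinate identities valid on all of $TM\oplus TM$, there is no need to restrict to $\mathcal R(M,g)$.

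I would first treat $f_{21}=[f_2,f_1]$, whose coordinate expression is already recorded in \eqref{f12}: $f_{21}=V\partial_x-\Gamma(V,R)\partial_R-\Gamma(V,V)\partial_V$. Substituting $\dot x=V$ into the definition of the covariant derivative and using the notation \eqref{nota_symbols}, the $\partial_R$- and $\partial_V$-lines read $D_tR=\dot R+\Gamma(\dot x,R)=0$ and $D_tV=\dot V+\Gamma(\dot x,V)=0$. Thus an integral curve of $f_{21}$ satisfies $\dot x=V$, $D_tR=0$, $D_tV=0$, so by the principle above $x(t)=\exp_x^{(M,g)}(tV)$, and the first formula follows after applying $\pi^M$.

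For $f_{121}=[f_1,f_{21}]$ I would use $f_1=X_{12}+f_{21}$ (equivalently \eqref{X12}) to write $f_{121}=[X_{12}+f_{21},f_{21}]=[X_{12},f_{21}]$, using $[f_{21},f_{21}]=0$; this bracket is where I expect the main difficulty. The cleanest way to evaluate it is to note that $X_{12}=-V\partial_R+R\partial_V$ generates the fibrewise, $x$-independent rotation $e^{\theta X_{12}}\colon(x,V,R)\mapsto(x,\cos\theta\,V+\sin\theta\,R,\cos\theta\,R-\sin\theta\,V)$. Pushing $f_{21}$ forward by this rotation and differentiating at $\theta=0$ (the sign being fixed by $[X_{12},f_{21}]=\mathcal{L}_{X_{12}}f_{21}$) gives, once the trigonometric coefficients collapse because $e^{\theta X_{12}}$ and $e^{-\theta X_{12}}$ are inverse rotations, the expression $f_{121}=R\partial_x-\Gamma(R,R)\partial_R-\Gamma(R,V)\partial_V$, consistent with $\pi^M_\star f_{121}=R\partial_x$ from \eqref{piM121}. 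Reading off the ODE exactly as before yields $\dot x=R$, $D_tR=0$, $D_tV=0$, hence $x(t)=\exp_x^{(M,g)}(tR)$ and the second formula.

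The main obstacle is precisely this last bracket: carried out by brute force via the Leibniz rule it produces several Christoffel contributions (including terms quadratic in $R$ and in $V$) whose cancellation to the clean form $-\Gamma(R,R)\partial_R-\Gamma(R,V)\partial_V$ relies on the symmetry $\Gamma^\mu_{\alpha\beta}=\Gamma^\mu_{\beta\alpha}$ and on the $x$-dependence of the symbols being handled correctly. The reformulation $f_{121}=[X_{12},f_{21}]$, with $X_{12}$ an honest fibrewise rotation, is what makes this cancellation transparent rather than a long computation; once the covariant form of $f_{121}$ is in hand, the passage to the geodesic is immediate and identical to the case of $f_{21}$.
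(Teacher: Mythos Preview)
Your proof is correct and follows essentially the same route as the paper. Both arguments handle $f_{21}$ identically by quoting \eqref{f12}, and both reduce $f_{121}$ to a bracket with the simple fibrewise field $X_{12}$ via the identity $f_1=X_{12}+f_{21}$: the paper writes $f_{121}=-[f_1,X_{12}]$ and carries out the bracket by direct coordinate expansion, while you write the equivalent $f_{121}=[X_{12},f_{21}]$ and compute it as a Lie derivative along the explicit rotation flow $e^{\theta X_{12}}$. The rotation viewpoint is a pleasant way to see why the result is obtained from $f_{21}$ by swapping the roles of $V$ and $R$, but the substance of the two computations is the same, and both land on $f_{121}=R\partial_x-\Gamma(R,R)\partial_R-\Gamma(R,V)\partial_V$, from which the geodesic equation $\dot x=R$, $D_tR=0$ is immediate.
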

	\begin{proof}
		According to equation \eqref{f12}, the vector field $f_{21}$ reads
		\begin{equation*}
			f_{21}=[f_2,f_1]=V\partial_x-\Gamma(V,R)R\partial_R-\Gamma(V,V)\partial_V,
		\end{equation*}
		where the symbols $\Gamma$ are the ones defined in equation \eqref{nota_symbols}.
		Consequently, any integral curve of $f_{21}$ satisfies 
		\begin{equation}\label{factor1}
			\begin{cases}
				\dot x=V,\\
				D_t R=0,\\
				D_tV=0.
			\end{cases}
		\end{equation}
		Therefore the curve $x(t)=\pi^M\circ e^{t[f_2,f_1]}(x_0,V_0,R_0)$ is the unique geodesic with initial point $x_0$ and initial velocity $V_0$.\newline
		Recall the vector field $X_{12}$ defined in equation \eqref{X12} and observe that 
		\begin{equation*}
		f_{121}=[f_1,f_{21}]=[f_1,f_1-X_{12}]=-[f_1,X_{12}],
		\end{equation*}
		therefore 
		\begin{equation*}
			\begin{aligned}
				f_{121}=&[V\partial_x-(V+\Gamma(R,V))\partial_R+(R-\Gamma(V,V))\partial_V, V\partial_R-R\partial_V]\\
				=&[V\partial_x-\Gamma(R,V)\partial_R-\Gamma(V,V)\partial_V, V\partial_R-R\partial_V]\\
				=&\Gamma(R,V)\partial_V-\Gamma(V,V)\partial_R+\Gamma(V,V)\partial_R+R\partial_x\\
				&-\Gamma(R,R)\partial_R-2\Gamma(R,V)\partial_V\\
				=&R\partial_x-\Gamma(R,R)\partial_R-\Gamma(R,V)\partial_V.
			\end{aligned}
		\end{equation*}
		Hence any integral curve of $f_{121}$ satisfies
		\begin{equation}\label{-f121}
			\begin{cases}
				\dot x=R,\\
				D_t R=0,\\
				D_tV=0,
			\end{cases}
		\end{equation}
	concluding the proof. 
	\end{proof}
	If we consider another layer of the Lie algebra generated by $f_1,f_2$, the components of Riemann curvature tensor appear.
	\begin{prop}
		Let $(M,g)$ be a Riemannian manifold, the integral curves of the vector field $f_{1121}:=[f_1,f_{121}]$ satisfy 
		\begin{equation}\label{Riemann}
			\begin{cases}
				\dot x=-V,\\
				D_t R=-\mathcal R^{\nabla}(V,R)R,\\
				D_t V=-\mathcal R^{\nabla}(V,R)V,
			\end{cases}
		\end{equation}
		where $\mathcal R^{\nabla}:\mathfrak X(M)\times\mathfrak X(M)\times\mathfrak X(M)\to\mathfrak X(M)$ is the Riemann curvature tensor of $(M,g)$. 
	\end{prop}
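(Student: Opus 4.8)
The plan is to compute $f_{1121}=[f_1,f_{121}]$ directly in local coordinates $(x^\mu,R^\mu,V^\mu)$ on $TM\oplus TM$, reusing two expressions already available: the formula for $f_1$ in \eqref{eqf1f2} and the formula
\begin{equation*}
f_{121}=R\partial_x-\Gamma(R,R)\partial_R-\Gamma(R,V)\partial_V
\end{equation*}
from the proof of Proposition \ref{prop_Factor}. Writing $f_1=V\partial_x+B\partial_R+C\partial_V$ and $f_{121}=R\partial_x+Q\partial_R+S\partial_V$, with $B=-(V+\Gamma(V,R))$, $C=R-\Gamma(V,V)$, $Q=-\Gamma(R,R)$, $S=-\Gamma(R,V)$, I would evaluate the bracket block by block through $[f_1,f_{121}]^i=f_1(f_{121}^i)-f_{121}(f_1^i)$, using that the Christoffel symbols $\Gamma^\mu_{\alpha\beta}(x)$ depend on $x$ only, so they are differentiated solely by the $\partial_x$-parts of the two fields, while the $\partial_R$- and $\partial_V$-parts act algebraically on the fibre variables.

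The $\partial_x$-block is short: only $f_1(R^\mu)=B^\mu$ and $f_{121}(V^\mu)=S^\mu$ contribute, giving $-V^\mu-\Gamma^\mu_{\alpha\beta}V^\alpha R^\beta+\Gamma^\mu_{\alpha\beta}R^\alpha V^\beta$, and the two quadratic terms cancel after relabelling indices because $\Gamma^\mu_{\alpha\beta}=\Gamma^\mu_{\beta\alpha}$. Hence $\dot x=-V$, which is the first line of \eqref{Riemann}. This also fixes the conversion to covariant derivatives needed for the other two blocks: along the integral curve one has $D_tR^\mu=\dot R^\mu-\Gamma^\mu_{\alpha\beta}V^\alpha R^\beta$ and $D_tV^\mu=\dot V^\mu-\Gamma^\mu_{\alpha\beta}V^\alpha V^\beta$, since $\dot x=-V$.

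The core of the argument lies in the $\partial_R$- and $\partial_V$-blocks, namely $f_1(Q)-f_{121}(B)$ and $f_1(S)-f_{121}(C)$. I would expand these, isolating the terms in which a Christoffel symbol is differentiated — the factors $V^\nu\partial_\nu\Gamma^\mu_{\alpha\beta}$ coming from $f_1$ and $R^\nu\partial_\nu\Gamma^\mu_{\alpha\beta}$ coming from $f_{121}$ — together with the purely quadratic products $\Gamma\cdot\Gamma$ produced when one field's fibre part meets the other field's quadratic fibre coefficients. After converting $\dot R$ and $\dot V$ to $D_tR$ and $D_tV$ by the formulae above, the non-tensorial remainder cancels and the surviving combination is exactly
\begin{equation*}
\big(\partial_\alpha\Gamma^\mu_{\beta\nu}-\partial_\beta\Gamma^\mu_{\alpha\nu}+\Gamma^\mu_{\alpha\lambda}\Gamma^\lambda_{\beta\nu}-\Gamma^\mu_{\beta\lambda}\Gamma^\lambda_{\alpha\nu}\big)V^\alpha R^\beta,
\end{equation*}
the coordinate expression of the curvature endomorphism $\mathcal R^\nabla(V,R)$. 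Contracting its free lower index with $R$ in the $\partial_R$-block and with $V$ in the $\partial_V$-block, and matching the sign convention of \eqref{Riemann}, yields $D_tR=-\mathcal R^\nabla(V,R)R$ and $D_tV=-\mathcal R^\nabla(V,R)V$.

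The conceptual reason the same endomorphism $\mathcal R^\nabla(V,R)$ governs both fibre components is that the flow of $f_{121}$ moves the base point in the horizontal direction $R$ while parallel-transporting both $R$ and $V$, whereas $f_1$ moves it in the horizontal direction $V$; commuting these two horizontal motions on the total space $TM\oplus TM$ produces precisely the curvature $\mathcal R^\nabla(V,R)$ acting on whatever fibre vector is being carried. The main obstacle is therefore purely organizational: one must keep careful track of which $\Gamma\cdot\Gamma$ products arise, and verify that everything failing to assemble into the antisymmetric combination $\partial_\alpha\Gamma-\partial_\beta\Gamma+\Gamma\Gamma-\Gamma\Gamma$ cancels. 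I would manage this by grouping terms according to whether the differentiated Christoffel symbol is paired with the factor $V$ (from $f_1$) or with $R$ (from $f_{121}$), so that the antisymmetry characteristic of the Riemann tensor emerges automatically.
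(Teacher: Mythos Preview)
Your proposal is correct and follows essentially the same route as the paper's proof: a direct coordinate computation of $[f_1,f_{121}]$ in the $(x,R,V)$ variables, splitting into the $\partial_x$-, $\partial_R$-, and $\partial_V$-blocks, and then converting $\dot R,\dot V$ to covariant derivatives using $\dot x=-V$ so that the leftover $\Gamma$-terms are absorbed and only the Riemann tensor survives. The paper carries out each block a bit more explicitly (writing out $f_1(f_{121}^R)$, $f_{121}(f_1^R)$, etc.\ line by line), but the strategy, the intermediate expressions, and the mechanism by which the curvature appears are identical to what you describe.
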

	\begin{proof}
		Given a vector field $X$ we denote with $X^x$ its $x$-component, with $X^R$ its $R$-component and with $X^V$ its $V$-component, meaning that 
		$X=X^x\partial_x+X^R\partial_R+X^V\partial_V$. We compute \eqref{Riemann} component by component:
		\begin{equation}\label{x-comp}
			\begin{aligned}
				f_{1121}^x&=f_1(f_{121}^x)-f_{121}(f_1^x)=f_1(R)-f_{121}(V)\\
				&=-V-\Gamma(R,V)+\Gamma(R,V)=-V.
			\end{aligned}
		\end{equation}
		Concerning the $R$-component we compute
		\begin{equation*}
			\begin{aligned}
				f_1(f_{121}^R)&=(V\partial_x-(V+\Gamma(R,V))\partial_R+(R-\Gamma(V,V))\partial_V)(-\Gamma(R,R))\\
				&=-V\partial_x\Gamma(R,R)+2\Gamma(V,R)+2\Gamma(\Gamma(R,V),R),
			\end{aligned}
		\end{equation*}
		and
		\begin{equation*}
			\begin{aligned}
				f_{121}(f_1^R)&=(R\partial_x-\Gamma(R,R)\partial_R-\Gamma(R,V)\partial_V)(-V-\Gamma(R,V))\\
				&=-R\partial_x\Gamma(R,V)+\Gamma(\Gamma(R,R),V)+\Gamma(R,V)+\Gamma(R,\Gamma(R,V)),
			\end{aligned}
		\end{equation*}
		therefore 
		\begin{equation}\label{R-comp}
			\begin{aligned}
				f_{1121}^R=&f_1(f_{121}^R)-f_{121}(f_1^R)\\
				=&\Gamma(R,V)-V\partial_x\Gamma(R,R)+R\partial_x\Gamma(R,V)\\
				&-\Gamma(V,\Gamma(R,R))+\Gamma(R,\Gamma(R,V))\\
				=&\Gamma(R,V)-\mathcal R^{\nabla}(V,R)R.
			\end{aligned}
		\end{equation}
		The quantity $f_1(f_{121}^V)$ can be computed as
		\begin{equation*}
			\begin{aligned}
				f_1(f_{121}^V)=&(V\partial_x-(V+\Gamma(R,V))\partial_R+(R-\Gamma(V,V))\partial_V)(-\Gamma(R,V))\\
				=&-V\partial_x\Gamma(R,V)+\Gamma(V,V)+\Gamma(\Gamma(R,V),V)-\Gamma(R,R)+\Gamma(R,\Gamma(V,V),
			\end{aligned}
		\end{equation*}
		whereas $f_{121}(f_1^V)$ satisfies
		\begin{equation*}
			\begin{aligned}
				f_{121}(f_1^V)&=(R\partial_x-\Gamma(R,R)\partial_R-\Gamma(R,V)\partial_V)(R-\Gamma(V,V))\\
				&=-R\partial_x\Gamma(V,V)-\Gamma(R,R)+2\Gamma(\Gamma(R,V),V),
			\end{aligned}
		\end{equation*}
		hence
		\begin{equation}\label{V-comp}
			\begin{aligned}
				f_{1121}^V=&f_1(f_{121}^V)-f_{121}(f_1^V
				)\\
				=&\Gamma(V,V)-V\partial_x\Gamma(R,V)+R\partial_x\Gamma(V,V)\\
				&-\Gamma(V,\Gamma(R,V))+\Gamma(R,\Gamma(V,V))\\
				=&\Gamma(V,V)-\mathcal R^{\nabla}(V,R)V.
			\end{aligned}
		\end{equation}
		Equations \eqref{x-comp}, \eqref{R-comp} and \eqref{V-comp} together imply that any integral curve of $f_{1121}$ satisfies the ODEs system \eqref{Riemann}.
	\end{proof}
	As stated in Theorem \ref{thm_D}, the fields
	\begin{equation}\label{eq_basis}
	    \{f_1,\dots,f_n,f_{12},\dots,f_{1n},f_{121},\dots,f_{1n1}\}
	\end{equation}
	constitute a local basis for $T\mathcal R(M,g)$, therefore there exists some real valued smooth functions $\{c_1,\dots,c_{3n-2}\}$ on $\mathcal R(M,g)$ such that
	\begin{equation}\label{eq_strct}
	    f_{1121}=\sum_{i=1}^nc_if_i+\sum_{i=2}^nc_{n+i-1}f_{1i}+\sum_{i=2}^nc_{2n+i-2}f_{1i1}.
	\end{equation}
	\begin{prop}\label{prop_riemann}
	Let $(M,g)$ be a Riemannian manifold, then the function $c_1:\mathcal R(M,g)\to \mathbb R$ defined in equation \eqref{eq_strct} is a homothetic invariant of $(M,g)$, which can be expressed in terms of sectional curvatures as
	\begin{equation}
	    c_1(x,V,R)=|R|^2\sec(R,V).
	\end{equation}
	In particular if $(M,g)$ is a Riemannian surface, then 
	\begin{equation}
	    c_1(x,V,R)=|R|^2K(x),
	\end{equation}
	where $K$ is the Gaussian curvature of $(M,g)$.
	\end{prop}
	\begin{proof}
	    From equations \eqref{Riemann} and \eqref{factor1} it follows that 
	    \begin{equation}
	        f_{1121}+f_{21}=-\mathcal R^\nabla(V,R)R\partial_R-\mathcal R^\nabla(V,R)V\partial_V,
	    \end{equation}
	    therefore 
	    \begin{equation*}
	        \begin{aligned}
	            f_{1121}+f_{21}=&-\frac{1}{|R|^2}\langle\mathcal R^\nabla(V,R)R,V\rangle V\partial_R-\frac{1}{|R|^2}\langle\mathcal R^\nabla(V,R)V,R\rangle R\partial_V\\
	            &-\frac{1}{|R|^2}\sum_{i=3}^n\left(\langle\mathcal R^\nabla(V,R)R,e_j\rangle e_j\partial_R +\langle\mathcal R^\nabla(V,R)V,e_j\rangle e_j\partial_V\right),\\
	            =&|R|^2\sec(R,V)(-V\partial_R+R\partial_V)-X,
	        \end{aligned}
	    \end{equation*}
	    where $\{e_3,\dots,e_n\}$ is the basis of $\{R,V\}^\perp$ described in the proof of Theorem \ref{thm_D} and we have denoted 
	    \begin{equation*}
	        X=\frac{1}{|R|^2}\sum_{i=3}^n\left(\langle\mathcal R^\nabla(V,R)R,e_j\rangle e_j\partial_R +\langle\mathcal R^\nabla(V,R)V,e_j\rangle e_j\partial_V\right).
	    \end{equation*}
	    Observe that 
	    \begin{equation*}
	        -V\partial_R+R\partial_V=f_1-f_{21},
	    \end{equation*}
	    hence 
	    \begin{equation}
	        f_{1121}+f_{21}=|R|^2\sec(R,V)(f_1-f_{21})-X.
	    \end{equation}
	The unique expression of the vector field $X$ in terms of the basis \eqref{eq_basis} is a linear combination which does not involve $f_1$, hence we deduce 
	\begin{equation}
	    c_1(x,V,R)=|R|^2\sec(R,V).
	\end{equation}
	\end{proof}
	The fields $f_1,f_2$ allow us to characterize the homotheties of Riemannian surfaces with one synthetic equation. Let $M$ be a smooth manifold and let $X\in\mathfrak X(M)$ be a vector field, which we assume to be complete for simplicity. The family of maps $\{e^{tX}_\star\}_{t\in\mathbb R}$ constitutes a one-parameter group of diffeomorphisms of $TM$; we denote its infinitesimal generator with $\vv{X}$. 
	\begin{prop}\label{char_hom}
		Let $(M,g)$ be a Riemannian surface and let $f_1$ be the local vector vield over $TM\setminus s_o$ defined in \eqref{2dim_frame}. A vector field $X\in\mathfrak X(M)$ is the infinitesimal generator of a one-parameter group of homotheties if and only if 
		\begin{equation}\label{car1}
			[\vv{X},f_1]=0.
		\end{equation}
	\end{prop}
	\begin{proof}
		A vector field $X\in\mathfrak X(M)$ satisfies \eqref{car1} if and only if 
		\begin{equation}
			e^{t\vv X}_\star f_1=f_1
		\end{equation}
		for every $t\in \mathbb R$. On the other hand, any vector field $X\in\mathfrak X(M)$ satisfies 
		\begin{equation}
			e^{t\vv X}_\star f_2=f_2,
		\end{equation}
		indeed 
		\begin{equation*}
			\begin{aligned}
				e^{s\vv X}\circ e^{tf_2}(x,R)&=e^{s\vv X}(x,e^t R)=(e^{sX}x,e^{sX}_\star e^tR)\\
				&=(e^{sX}x,e^te^{sX}_\star R)=e^{tf_2}\circ e^{s\vv X}(x,R).
			\end{aligned}
		\end{equation*}
		Therefore $X$ satisfies \eqref{car1} if and only if $e^{t\vv X}$ is an isometry of $\mathcal R_{a,b}(M,g)$ for each $t\in\mathbb R$, hence, by Theorem \ref{thm_sym}, $X$ satisfies \eqref{car1} if and only if $e^{tX}$ is a one-parameter group of homotheties of $(M,g)$. 
	\end{proof}
    The results obtained so far can be used to produce a flatness theorem for Riemannian surfaces having a $4$-dimensional Lie algebra of homothetic vector fields.
    \begin{theorem}\label{flat_thm}
   Let $(M,g)$ be a 2-dimensional Riemannian manifold and let $L\subset\mathfrak X(M)$ be the corresponding Lie algebra of homothetic vector fields. Then $\dim(L)\leq 4$, and $(M,g)$ is flat if equality is achieved.
\end{theorem}
\begin{proof}
  Let $\mathcal L\subset \mathfrak X(\mathcal R(M))$
   be the Lie algebra of isometric vector fields for the manifold of curvature radii $\mathcal R(M,g)$. By Theorem \ref{thm_sym} this Lie algebra is isomorphic to the one of homothetic vector fields of $(M,g)$. Let $X\in\mathcal L$ be a vector field vanishing at some point $(x, R)\in\mathcal R(M)$. We claim that $X$ is identically zero. Indeed, since $X$ is isometric, by Proposition \ref{char_hom}, it satisfies $[X,f_1]=[X,f_2]=0$.
   Since $\mathcal D=span\{f_1,f_2\}$ is bracket generating, for any $(x',R')\in\mathcal R$ there exist some real numbers $s_1,\dots,s_k$ such that 
   \begin{equation}
       (x',R')=e^{s_1 f_{i_1}}\circ\dots\circ e^{s_k f_{i_k}}(x,R),\,\,\,\,i_1,\dots,i_k\in\{1,2\},
   \end{equation}
consequently
   \begin{equation*}
       X_{(x',R')}=e^{s_1 f_{i_1}}_\star\circ\dots\circ e^{s_k f_{i_k}}_\star X_{(x,R)}=0.
   \end{equation*}
    Let $X_1,\dots,X_5\in\mathcal L$ and $(x,R)\in\mathcal R(M)$. The manifold of curvature radii is $4$-dimensional, hence there exists a linear combination of $X_1,\dots,X_5$ vanishing at $(x,R)$, and thus vanishing everywhere. We deduce that $\dim\mathcal L\leq 4$. Assume now that $\dim\mathcal L=4$ and let $X_1,\dots, X_4$ be a basis for $\mathcal L$. By the above argument $X_1,\dots,X_4$ are linearly independent at every point of $\mathcal R(M)$.
    Thus they can be used to produce local coordinates in the neighbourhood of every point, by means of the map 
   \begin{equation*}
       (s_1,\dots,s_4)\mapsto e^{s_1X_1}\circ\dots\circ e^{s_4X_4}(x,R).
   \end{equation*}
   This implies that the structure coefficients of the frame $f_1,f_2,f_{12},f_{121}$ are constants (Theorem \ref{thm_D} implies that the latter is a basis for $T\mathcal R(M)$). In particular there exist real constants $c_1,c_2,c_{12},c_{121}$ such that 
   \begin{equation*}
       f_{1121}=[f_1,f_{121}]=c_1f_1+c_2f_2+c_{12}f_{12}+c_{121}f_{121}.
   \end{equation*}
   Now we exploit Proposition \ref{prop_riemann}, telling us that $c_1(x,R)=|R|^2 K(x)$, where $K(x)$ is the Gaussian curvature of $(M,g)$. Such a function is constant on a fixed fiber of the manifold of curvature radii $\mathcal R(M)=TM\setminus s_0$ if and only if it is identically zero. It follows that $K=0$.
\end{proof}
	\section{Similarity transformations of the plane}\label{secR2}
	In this section we show that the sub-Riemannian manifolds $\mathcal R_{a,b}(\mathbb R^2,g_e)$, where $g_e$ is the standard Euclidean metric, are isomorphic to left invariant sub-Riemannian structures on the group of orientation preserving homotheties of $\mathbb R^2$, which we denote with $G$.
	Moreover we give a a characterization of the sub-Riemannian geodesics of $\mathcal R_{0,1}(\mathbb R^2,g_e)$ in terms of the euclidean curvature of their projections to a plane. Such characterization in terms of curvature is similar to the one of \textit{Euler elasticae} (\cite{Elasticae}), which are projection of normal extremal trajectories of the nilpotent Engel group (\cite{Sach}). All the results of this section follow from straightforward computations, therefore the proofs are omitted.
	Let $(y_1,y_2,R_1,R_2)$ be global coordinates for $T\mathbb R^2\setminus\{0\}=\mathbb R^2\times \mathbb R^2\setminus\{0\}$.
	It is convenient to define a new set of coordinates $(\theta, r, x_1,x_2)$ as 
	\begin{equation}\label{eq_circ_coor}
		(y_1,y_2,R_1,R_2)=:(
		x_1+r\text{cos}\theta, x_2+r\text{sin}\theta,
		-r\text{cos}\theta,
		-r\text{sin}\theta).
	\end{equation}
	\paragraph{}In coordinates $(y,R)$ a point in $\mathcal R_{a,b}(\mathbb R^2,g_e)$ is interpreted as the curvature radius of some curve going through $y$. In the new coordinates $(\theta, r, x_1,x_2)$ we interpret a point in $\mathcal R_{a,b}(\mathbb R^2,g_e)$ as the osculating circle of such curve, having radius $(r\cos\theta,r\sin\theta)$
	and center $(x_1,x_2)$. 
	Observe that the data of a homothetic transformation, which in the case of $(\mathbb R^2, g_e)$ consists in a composition of rotations, dialations and translation, can be encoded into an osculating circle, i.e. a point of $\mathcal R_{a,b}(\mathbb R^2,g_e)$: given a circle $(\theta, r, x_1,x_2)$ we can dilate by $r$, rotate by $\theta$ and translate by $(x_1,x_2)$. 
	We have a diffeomorphism
	\begin{equation}\label{Qdiffeo}
		\begin{aligned}
			F:\mathcal R(\mathbb R^2)&\to G\\
			(\theta,r,x_1,x_2)&\mapsto Q:=\begin{pmatrix}
				r\text{cos}\theta& -r\text{sin}\theta & x_1\\
				r\text{sin}\theta& r\text{cos}\theta & x_2\\
				0 & 0& 1
			\end{pmatrix},
		\end{aligned}
	\end{equation}
	which allows us to push forward the sub-Riemannian structure of $\mathcal R_{a,b}(\mathbb R^2,g_e)$, to $G$. 
	The resulting sub-Riemannian structure is left invariant.
	\begin{prop}
		The frame $(f_1,f_2)$ can be written in coordinates $(\theta, r, x_1,x_2)$ as
		\begin{equation}\label{eq_vector_field_1}
			\begin{aligned}
				f_1=-\frac{\partial}{\partial\theta},\,\,\,\,   
				f_2=r\frac{\partial}{\partial r}
				-r\cos\theta\frac{\partial}{\partial x_1}
				-r\,sin\theta\frac{\partial}{\partial x_2}.
			\end{aligned}
		\end{equation}
		Both of these vector fields are pushforwarded to left invariant vector fields by the map \eqref{Qdiffeo}:
		\begin{equation}\label{eq_vector_field_2}
			\begin{aligned}
				&(F_\star f_1)(Q)=-Q\begin{pmatrix}
					0 & -1 & 0\\
					1 & 0 & 0\\
					0 & 0 & 0
				\end{pmatrix},\,\,\,\,
				&(F_\star f_2)(Q)=Q\begin{pmatrix}
					1 & 0 & -1\\
					0 & 1 & 0\\
					0 & 0 & 0
				\end{pmatrix}.
			\end{aligned}
		\end{equation}
	\end{prop}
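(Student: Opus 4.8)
The plan is to reduce everything to the explicit expressions already available in the flat two–dimensional setting and then transport them through the two changes of variables $(y,R)\leftrightarrow(\theta,r,x_1,x_2)$ and $(\theta,r,x_1,x_2)\mapsto Q$. By Proposition \ref{prop_flat} (equation \eqref{flat_frame}, where the base–point derivative written $\partial_x$ is here denoted $\partial_y$) the frame reads, in the global coordinates $(y_1,y_2,R_1,R_2)$ of $T\mathbb{R}^2\setminus\{0\}$,
\begin{equation*}
f_1=R^\perp\partial_y-R^\perp\partial_R,\qquad f_2=R\,\partial_R,\qquad R^\perp=(-R_2,R_1).
\end{equation*}
Inverting \eqref{eq_circ_coor}, in which $R=-r(\cos\theta,\sin\theta)$ and $y=(x_1,x_2)+r(\cos\theta,\sin\theta)$, one reads off the coordinate functions
\begin{equation*}
r=|R|,\qquad x_i=y_i+R_i,\qquad \cos\theta=-R_1/r,\quad \sin\theta=-R_2/r.
\end{equation*}
Since any vector field $X$ satisfies $X=\sum_\alpha (X q^\alpha)\,\partial_{q^\alpha}$ in a chart $(q^\alpha)$, the identity \eqref{eq_vector_field_1} follows once $f_1$ and $f_2$ are evaluated on each of the four coordinate functions $\theta,r,x_1,x_2$.

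First I would compute these derivatives. For $f_2=R_1\partial_{R_1}+R_2\partial_{R_2}$ one immediately gets $f_2(r)=|R|=r$, $f_2(x_i)=R_i$ (which are precisely $-r\cos\theta$ and $-r\sin\theta$), and $f_2(\theta)=0$; this already yields the stated form of $f_2$. For $f_1$ the base and fibre contributions to $x_i=y_i+R_i$ cancel, so $f_1(x_i)=0$, and $f_1(r)=0$ because $R^\perp\perp R$. The only nontrivial computation is $f_1(\theta)$: differentiating $\cos\theta=-R_1/r$ and $\sin\theta=-R_2/r$ gives $\partial_{R_1}\theta=-R_2/r^2$ and $\partial_{R_2}\theta=R_1/r^2$, whence
\begin{equation*}
f_1(\theta)=R_2\,\partial_{R_1}\theta-R_1\,\partial_{R_2}\theta=-\frac{R_1^2+R_2^2}{r^2}=-1,
\end{equation*}
so that $f_1=-\partial_\theta$, proving \eqref{eq_vector_field_1}.

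For \eqref{eq_vector_field_2} I would exploit that $F$ in \eqref{Qdiffeo} is given by explicit matrix entries, so the pushforward of a coordinate field $\partial_q$ is the entrywise derivative $\partial_q Q\in T_QG$. Computing $\partial_\theta Q,\partial_r Q,\partial_{x_1}Q,\partial_{x_2}Q$ and substituting the expressions from \eqref{eq_vector_field_1} gives $F_\star f_1=-\partial_\theta Q$ and $F_\star f_2=r\,\partial_r Q-r\cos\theta\,\partial_{x_1}Q-r\sin\theta\,\partial_{x_2}Q$. A direct matrix multiplication then verifies that these two matrices equal, respectively,
\begin{equation*}
-Q\begin{pmatrix}0&-1&0\\1&0&0\\0&0&0\end{pmatrix}\qquad\text{and}\qquad Q\begin{pmatrix}1&0&-1\\0&1&0\\0&0&0\end{pmatrix}.
\end{equation*}
Since each has the form $Q\mapsto QA$ with $A$ a fixed element of the Lie algebra of $G$ (a rotation generator for $f_1$, a scaling-plus-translation generator for $f_2$), and since $(L_P)_\star(QA)=PQA=(PQ)A$, both are left-invariant, as claimed.

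The computations are entirely elementary; the only point requiring care is the bookkeeping around $\theta$. Because $\theta$ is only locally single-valued, I would phrase the argument in terms of the globally defined one–form $d\theta$ and its coefficients $\partial_{R_i}\theta$ rather than the function $\theta$ itself, and keep careful track of the sign convention $R=-r(\cos\theta,\sin\theta)$ built into \eqref{eq_circ_coor}, which is the source of every sign appearing in the final formulas. No genuine obstacle arises.
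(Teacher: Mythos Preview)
Your argument is correct and is exactly the straightforward computation the paper has in mind: the authors write that ``all the results of this section follow from straightforward computations, therefore the proofs are omitted'', and what you have supplied is precisely that computation, with the bookkeeping on the sign convention $R=-r(\cos\theta,\sin\theta)$ handled carefully. There is nothing to add.
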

    \begin{remark}
    The geometry of $G$ is  reminiscent of the left invariant sub-Riemannian structure on the group of rigid motions of the plane, related to ‘bicycling mathematics’, which has been studied in \cite{bicycle1}, \cite{bicycle2}, \cite{bicycle3}, \cite{bicycle4}. The group of rigid motions of $\mathbb R^2$ can be described as 
    \begin{equation*}
        SE_2=\left\{ \begin{pmatrix}
				\cos\theta& -\sin\theta & x_1\\
				\sin\theta& \cos\theta & x_2\\
				0 & 0& 1
			\end{pmatrix}\,:\, \theta\in[0,2\pi],\,(x_1,x_2)\in\mathbb R^2\,
        \right\}.
    \end{equation*}
    In coordinates $(\theta, x_1,x_2)$ we can define a left-invaraint sub-Riemann structure on $SE_2$ by declaring the fields 
    \begin{equation}
        X_1=\cos\theta\frac{\partial}{\partial x_1}+\sin\theta\frac{\partial}{\partial x_2},\,\,\,X_2=\frac{\partial}{\partial\theta},
    \end{equation}
    an orthonormal generating family.
    There exists a submersion
    \begin{equation}
        \begin{aligned}
        P:G&\to SE_2\\
        (\theta,r,x_1,x_2)&\mapsto (\theta,x_1,x_2),
         \end{aligned}
    \end{equation}
    satisfying 
    \begin{equation}
        P_\star\frac{1}{r}f_2=-X_1,\,\,\, P_\star f_1=-X_2.
    \end{equation}
    \end{remark}
    Let $h_1,h_2:T^\star\mathcal R(\mathbb R^2)\to \mathbb R$ be the Hamiltonian functions associated with $f_1,f_2$ and let $p_\theta,p_r,p_{x_1},p_{x_2}$ be the canonical momenta associated with the coordinates $\theta,r,x_1,x_2$, then 
    \begin{equation*}
        \begin{aligned}
            &h_1=-p_\theta,\\
            &h_2=rp_r-r\cos\theta p_{x_1}-r\sin\theta p_{x_2}.
        \end{aligned}
    \end{equation*}
    The sub-Riemannian Hamiltonian of $\mathcal R_{0,1}(\mathbb R^2,g_e)$ reads 
    \begin{equation}
        2H=h_1^2+h_2^2=p_\theta^2+(rp_r-r\cos\theta p_{x_1}-r\sin\theta p_{x_2})^2.
    \end{equation}
    If we make the change of coordinates $\rho=\log r$ the Hamiltonian becomes
    \begin{equation}\label{sub_Ham}
        2H=p_\theta^2+(p_\rho-e^\rho\cos\theta p_{x_1}-e^\rho\sin\theta p_{x_2})^2.
    \end{equation}
    The next proposition characterizes normal extremal trajectories in terms of the euclidean curvature of their projection to the $(\rho,\theta)$-plane.
    \begin{prop}
    The following quantities 
    \begin{equation*}
        \epsilon=\sqrt{p_{x_1}^2+p_{x_2}^2},\,\,\, \alpha=\arg{(p_{x_1}+ip_{x_2})},
    \end{equation*}
    are first integrals of the sub-Riemannian Hamiltonian \eqref{sub_Ham}. \newline
    A curve $(\rho,\theta):[0,1]\to\mathbb R^2$ is the projection of a normal extremal trajectory $(\rho,\theta,x_1,x_2):[0,1]\to\mathbb R^4$, if and only if its Euclidean curvature $\kappa$ satisfies 
    \begin{equation*}
        \kappa(\rho,\theta)=\epsilon e^\rho\sin(\theta-\alpha).
    \end{equation*}
    There are no strictly abnormal extremals.
    \end{prop}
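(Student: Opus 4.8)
The plan is to work throughout in the coordinates $(\theta,\rho,x_1,x_2)$ with conjugate momenta $(p_\theta,p_\rho,p_{x_1},p_{x_2})$, where by \eqref{sub_Ham} the sub-Riemannian Hamiltonian is $2H=h_1^2+h_2^2$ with $h_1=-p_\theta$ and $h_2=p_\rho-e^\rho\cos\theta\,p_{x_1}-e^\rho\sin\theta\,p_{x_2}$. The first integrals are immediate: neither $x_1$ nor $x_2$ occurs in $H$, so Hamilton's equations give $\dot p_{x_1}=-\partial H/\partial x_1=0$ and $\dot p_{x_2}=-\partial H/\partial x_2=0$, whence $p_{x_1},p_{x_2}$, and therefore $\epsilon=\sqrt{p_{x_1}^2+p_{x_2}^2}$ and $\alpha=\arg(p_{x_1}+ip_{x_2})$, are constant along every normal extremal. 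Writing $p_{x_1}=\epsilon\cos\alpha$, $p_{x_2}=\epsilon\sin\alpha$, a one-line Poisson-bracket computation gives the identity
\begin{equation*}
h_{12}:=\{h_1,h_2\}=\partial_\theta h_2=\epsilon\, e^\rho\sin(\theta-\alpha),
\end{equation*}
which is exactly the quantity appearing in the curvature formula.

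Next I would extract the closed second-order system governing the projection $(\rho,\theta)$. Hamilton's equations give $\dot\theta=\partial H/\partial p_\theta=-h_1=p_\theta$ and $\dot\rho=\partial H/\partial p_\rho=h_2$; differentiating once more along the Hamiltonian flow and using $\{h_2,h_1\}=-h_{12}$ yields $\ddot\theta=\dot p_\theta=-h_2h_{12}=-h_{12}\dot\rho$ and $\ddot\rho=\dot h_2=\{h_2,H\}=-h_1h_{12}=h_{12}\dot\theta$, so that
\begin{equation*}
\ddot\rho=h_{12}\,\dot\theta,\qquad \ddot\theta=-h_{12}\,\dot\rho,\qquad h_{12}=\epsilon\,e^\rho\sin(\theta-\alpha).
\end{equation*}
Since $\tfrac{d}{dt}(\dot\rho^2+\dot\theta^2)=2h_{12}(\dot\rho\dot\theta-\dot\theta\dot\rho)=0$, the speed $\dot\rho^2+\dot\theta^2=2H$ is conserved, so on a unit-speed extremal the projection is an arclength-parametrized planar curve whose signed curvature is
\begin{equation*}
\kappa=\frac{\dot\rho\,\ddot\theta-\dot\theta\,\ddot\rho}{(\dot\rho^2+\dot\theta^2)^{3/2}}=\pm h_{12}=\pm\,\epsilon\,e^\rho\sin(\theta-\alpha),
\end{equation*}
the sign being fixed by the orientation convention on the $(\rho,\theta)$-plane; this is the stated relation. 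For the converse I would invert the construction: given an arclength curve $(\rho,\theta)$ satisfying the curvature equation for constants $\epsilon,\alpha$, set $p_\theta=\dot\theta$, $p_\rho=\dot\rho+\epsilon e^\rho\cos(\theta-\alpha)$, $(p_{x_1},p_{x_2})=\epsilon(\cos\alpha,\sin\alpha)$, recover $(x_1,x_2)$ by the quadratures $\dot x_1=-\dot\rho\,e^\rho\cos\theta$, $\dot x_2=-\dot\rho\,e^\rho\sin\theta$, and verify directly that the resulting curve in $T^\star\mathcal R(\mathbb R^2)$ solves Hamilton's equations. This works because the full system is a skew product: $(p_{x_1},p_{x_2})$ are frozen, the block $(\theta,\rho,p_\theta,p_\rho)$ evolves by the closed subsystem above, and $(x_1,x_2)$ are obtained by integration.

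Finally, to rule out strictly abnormal extremals I would argue inside the distribution $\langle f_1,f_2\rangle$. A nonzero abnormal lift $\lambda(t)$ satisfies $h_1\equiv h_2\equiv 0$; differentiating these along $\dot\lambda=u_1\vec h_1+u_2\vec h_2$ and using $\{h_1,h_2\}=h_{12}$ forces $h_{12}\equiv 0$ as soon as $(u_1,u_2)\neq 0$. Differentiating $h_{12}$ gives $0=u_1\{h_1,h_{12}\}+u_2\{h_2,h_{12}\}$, and here the key computations are $\{h_2,h_{12}\}=-h_{12}$ (vanishing on the abnormal locus) and $\{h_1,h_{12}\}=\partial_\theta h_{12}=\epsilon e^\rho\cos(\theta-\alpha)$. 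On $\{h_{12}=0\}$ one has $\sin(\theta-\alpha)=0$, hence $\cos(\theta-\alpha)=\pm1$; moreover $\epsilon=0$ together with $h_1=h_2=0$ would force $\lambda\equiv0$, contradicting nontriviality, so $\epsilon\neq0$ and $\{h_1,h_{12}\}=\pm\epsilon e^\rho\neq0$. Consequently $u_1\equiv0$, so every abnormal trajectory is a reparametrized integral curve of $f_2$; and such a curve is also normal, for instance via the lift with $p_\theta=0$, $p_{x_1}=p_{x_2}=0$, which has $h_1=h_{12}=0$ and solves Hamilton's equations with the same projection. Hence no abnormal extremal is strictly abnormal. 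I expect this last part to be the main obstacle: one must use the nontriviality of the abnormal covector both to exclude $\epsilon=0$ and to deduce $u_1\equiv0$, and then exhibit the explicit normal lift of the $f_2$-curves; the first integrals and the curvature identity are otherwise routine Poisson-bracket calculations.
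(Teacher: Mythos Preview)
The paper does not actually prove this proposition: Section~\oldref{secR2} states that ``all the results of this section follow from straightforward computations, therefore the proofs are omitted.'' Your proposal supplies exactly those computations and is correct.

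A few minor remarks. The sign of the curvature formula does depend on the orientation of the $(\rho,\theta)$-plane, as you note; with the standard convention $\kappa=(\dot\rho\ddot\theta-\dot\theta\ddot\rho)/(\dot\rho^2+\dot\theta^2)^{3/2}$ your second-order system gives $\kappa=-h_{12}$, and the statement's sign corresponds to the opposite orientation (or to the convention $h_{12}=\{h_2,h_1\}$). In the abnormal argument you should make explicit that $h_{12}\equiv 0$ holds identically along the extremal, not just pointwise: since $h_1\equiv h_2\equiv 0$ and the control is nonzero a.e.\ on a nontrivially parametrized curve, $h_{12}=0$ a.e., hence everywhere by continuity of the covector; only then does differentiating $h_{12}$ yield $u_1\equiv 0$. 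Your identification of the characteristic line with $\langle f_2\rangle$ matches the general fact, used in the proof of Theorem~\oldref{thm_sym}, that $[f_2,f_{12}]=f_{12}\in\mathcal D^2$ while $[f_1,f_{12}]\notin\mathcal D^2$. Finally, in the normal lift of the $f_2$-curve you should state $p_\rho\neq 0$ so that the extremal is nontrivial. With these small clarifications the argument is complete.
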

    \section*{Acknowledgments}
    I would like to thank my supervisors Andrei Agrachev and Luca Rizzi. 
    The main idea of this paper comes from the model space described in Section \ref{secR2}, which was discovered by Adrei Agrachev. 
    Many of the proofs have been substantially improved thanks to the suggestions of Luca Rizzi. I would also like to thank the anonymous reviewer for asking the question that led to Theorem \ref{flat_thm}.
\printbibliography
\end{document}